\definecolor{darkred}{rgb}{0.75,0,0.3}
\newcommand\al{\alpha}
\newcommand\Aff{\operatorname{\sf Aff}}
\newcommand\AND{\quad\text{and}\quad}
\newcommand\Aut{\operatorname{\sf Aut}}
\newcommand\C{\mathbb C}
\newcommand\de{\delta}
\newcommand\dg{\mathsf{deg}}
\newcommand\hor{\mathfrak{h}}
\newcommand\Hor{\mathfrak{H}}
\newcommand\la{\lambda}
\newcommand\mm{\mathsf m}
\newcommand\ol{\overline}
\newcommand\R{\mathbb R}
\newcommand\res{\mathsf{res}}
\newcommand\spec{\mathsf{spec}}
\newcommand\T{\mathbb T}
\newcommand\uno{\mathbf{1}}
\newcommand\wh{\widehat}
\newcommand\wt{\widetilde}
\newcommand\Z{\mathbb Z}
\renewcommand{\le}{\leqslant}
\renewcommand{\ge}{\geqslant}
\numberwithin{equation}{section}
\newtheoremstyle{mythm}
  {9pt}
  {9pt}
  {\itshape}
  {0pt}
  {\bfseries}
  {}
  { }
  {\thmnumber{(#2)}\thmname{ #1}\thmnote{ #3}}
\newtheoremstyle{mydef}
  {9pt}
  {9pt}
  {\normalfont}
  {0pt}
  {\bfseries}
  {}
  { }
  {\thmnumber{(#2)}\thmname{ #1}\thmnote{ #3}}
\theoremstyle{mythm}
\newtheorem{thm}[equation]{Theorem.}
\newtheorem{pro}[equation]{Proposition.}
\newtheorem{lem}[equation]{Lemma.}
\newtheorem{cor}[equation]{Corollary.}
\theoremstyle{mydef}
\newtheorem{con}[equation]{Convention.}
\newtheorem{rmk}[equation]{Remark.}
\newtheorem{rmks}[equation]{Remarks.}
\begin{document}$\,$ \vspace{-1truecm}
\title{Multiple boundary representations of $\la$-harmonic functions on trees}
\author{\bf Massimo A. Picardello, Wolfgang Woess}
\address{\parbox{.8\linewidth}{Dipartimento di Matematica\\ 
Universit\`a di Roma ``Tor Vergata''\\
I-00133 Rome, Italy}}
\email{picard@axp.mat.uniroma2.it}

\address{\parbox{.8\linewidth}{Institut f\"ur Diskrete Mathematik,\\ 
Technische Universit\"at Graz,\\
Steyrergasse 30, A-8010 Graz, Austria}}
\email{woess@tugraz.at}

\subjclass[2010] {31C20; 
                  05C05, 
                  28A25, 
                  60G50 
                  }
\keywords{Tree, stochastic transition operator, $\la$-harmonic functions, 
Poisson kernel, distribution, boundary integral}
\begin{abstract}
We consider a countable tree $T$, possibly having vertices with infinite degree,
and an arbitrary stochastic nearest neighbour transition
operator $P$. We provide a boundary integral representation for
general eigenfunctions of $P$ with eigenvalue $\lambda \in C$, under the condition
that the oriented edges can be equipped with complex-valued weights satisfying
three natural axioms. These axioms guarantee that one can construct a $\lambda$-Poisson kernel.
The boundary integral is with respect to distributions, that is, elements in the
dual of the space of locally constant functions. Distributions are interpreted as finitely
additive complex measures. In general, they do not extend to $\sigma$-additive 
measures: for this extension, 
a summability condition over disjoint boundary arcs is required. Whenever $\lambda$ is in the resolvent of $P$
as a self-adjoint operator on a naturally associated $\ell^2$-space and the 
diagonal elements of the resolvent (``Green function'') do not vanish
at $\lambda$, one can use the ordinary edge weights corresponding to the Green function
and obtain the ordinary $\la$-Martin kernel.
 
We then consider the case when $P$ is invariant under a transitive
group action. In this situation, we study the phenomenon that in addition to the 
$\la$-Martin kernel, there may be further choices for the edge weights which give rise
to another $\la$-Poisson kernel with associated integral representations.
In particular, we compare the resulting distributions on the boundary.

The material presented here is closely related to the contents of our ``companion'' paper
\cite{PiWo}.
 \end{abstract}

\maketitle

\markboth{{\sf M. A. Picardello and W. Woess}}
{{\sf Multiple boundary representations on trees}}
\baselineskip 15pt

\section{Introduction}\label{sec:intro}

Let $T$ be a countable tree, i.e., a connected graph without cycles. We allow vertices
with infinite degree, but for simplicity, we exclude leaves (vertices with degree $1$).
Here, the degree $\dg(x)$ of a vertex $x$ is the number of its neighbours.
We tacitly identify $T$ with its vertex set.

On $T$, we consider the stochastic transition matrix 
$P = \bigl(p(x,y)\bigr)_{x,y \in T}$ of a nearest neighbour random walk.
This means that $p(x,y) > 0$ if and only if $x \sim y$ (i.e., $x$ and $y$ 
are neighbours).  $P$ acts on functions $f: T \to \C$ by
\begin{equation}\label{eq:Pf}
Pf(x) =  \sum_y p(x,y)f(y)\,,
\end{equation}
where in case when $\dg(x) = \infty$ we postulate that the sum converges absolutely.
For $\la \in \C$, a \emph{$\la$-harmonic function} is a function $h: T \to \C$
which satisfies $Ph = \la \cdot h\,$.

For ``good'' values of $\la$, every $\la$-harmonic function has a boundary integral
representation over the geometric boundary at infinity of the tree. This is
analogous to the Poisson integral formula for classical harmonic functions on the open 
unit disk, where the boundary is the unit circle. The Poisson kernel of the disk has
to be replaced by the \emph{$\la$-Martin kernel,} and the integral is with respect to a
\emph{distribution} on the boundary.  The good values include in particular
$\la=1$, when the random walk is transient.
More generally, they comprise at least all $\la \in \C$ where $|\la| > \rho$  with 
$\rho=\rho(P)$, the \emph{spectral radius} of the random walk (the definitions will be given in more detail further on).
For \emph{positive} $\la$-harmonic functions -- whose existence necessitates that
$\la \ge \rho$ is real --  the representing distribution on the
boundary is a finite ($\sigma$-additive) Borel measure.  

The results that we have mentioned in this last paragraph are due to
{\sc Cartier}~\cite{Ca} for the case when $\la \ge \rho$ and the tree is
locally finite, and the extension to the non-locally finite case can be found
in the book of {\sc Woess}~\cite[Ch. 9]{W-Markov}. For general complex 
$\la$, these results are proved in our recent paper \cite{PiWo}, when $\la$ is in
the resolvent set of $P$ and the diagonal elements of the \emph{Green kernel} (Green function)
do not vanish at $\la$. This was preceded by a result of 
{\sc Fig\`a-Talamanca and Steger}~\cite{FiSt}
for the locally fininte case, when $P$ is the transition matrix of a group invariant random walk 
on a free group, or a close relative of that group.

All this comprises the long known example of the \emph{simple random walk} on $T = \T_q\,$,
the regular tree with degree $q+1 \ge 3$, where $p(x,y) = 1/(q+1)$ when $x \sim y$.
In this case, it follows from the results of 
{\sc Mantero and Zappa}~\cite{MaZa} that,
besides the ordinary $\la$-Martin kernel, there is a second kernel which gives rise to 
a boundary integral representation of $\la$-harmonic functions. Indeed, this plays
an important role  in the context of the representation theory of free groups. 
Since then, this phenomenon has remained the object of repeated discussions,
in particular between the first author and David Singman (George Mason University, Fairfax).

The purpose of the present note is to shed more light on these
multiple boundary integral representation by approaching them from a wider viewpoint. Thereby, 
part of our presentation lays out in detail several proofs which 
take up and generalize previous work. 

We first (\S \ref{sec:bdry}) recall the construction of the boundary at infinity $\partial T$ of $T$ and the 
corresponding compactification. We introduce distributions on $\partial T$ and explain
how locally constant functions on $\partial T$ are integrated against a distribution.

Then (\S \ref{sec:intrep}) we start with an arbitrary $\la \in \C$ and put weights on the 
oriented edges
of $T$. They are required to satisfy certain axioms (this might not be possible for all $\la$)
and then they can be used to define a general $\la$-potential
kernel and subsequently a $\la$-Poisson kernel $k(x,y)$, $x,y \in T$. This kernel extends 
in the second variable to a locally
constant function on $\partial T$, and we use it to prove a general Poisson-Martin boundary integral
representation theorem for $\la$-harmonic functions. 

Let us write  $\res^*(P)$ for the set of all elements in the resolvent set of $P$
as a self-adjoint operator for which the diagonal matrix elements of the resolvent
($\la$-Green function) do not vanish. For $\la \in \res^*(P)$, the classical  
weights satisfying the needed axioms are suitable quotients of the $\la$-Green function,
which we call the \emph{Green weights.}
This leads to the above mentioned representation proved in \cite{PiWo} and the preceding work.

Later on (\S 4), we restrict attention to the case when $P$ is invariant under
a transitive group of automorphisms of $\,T$. In this situation, we discuss the cases where
in addition to the classical ones, one can find different sets of weights which also
lead to boundary integral representations for the same space of $\la$-harmonic 
functions. In this case, however, we show that the distribution which arises for
a given $\la$-harmonic function does typically not extend to a ($\sigma$-additive) Borel
measure on the boundary, even when this is true with respect to the Green weights.

\section{Boundary and distributions}\label{sec:bdry}

\textbf{A. The end compactification}

\smallskip

For two vertices $x, y \in T$, the \emph{geodesic} or \emph{geodesic path} from $x$ to $y$ is the
unique shortest path $\pi(x,y)$ from $x$ to $y$, and the distance $d(x,y)$ is the
length (number of edges) of $\pi(x,y)$.
 
A \emph{ray} or \emph{geodesic ray} in $T$
is a sequence $[x_0\,,x_1\,,x_2\,,\dots]$ such that $x_{i-1}\sim x_i$ and $x_{i+1}\ne x_{i-1}$
for all $i$. Two rays are \emph{equivalent,} if they differ by finitely many initial
vertices. An \emph{end} of $T$ is an equivalence class of rays. If $x$ is a vertex and $\xi$ an
end, there is a unique geodesic ray $\pi(x,\xi)$ which starts at $x$ and represents $\xi$.  
The boundary $\partial T$ of $T$ is the set of all ends of $T$. 
For $x, y \in T$ with $x \ne y$, the \emph{branch} or \emph{cone} $T_{x,y}$ is the subtree 
spanned by all vertices $w$ with $y \in \pi(x,w)$, and the \emph{boundary arc}
$\partial T_{x,y}$ is the set of all ends which have a representative ray in $T_{x,y}\,$.

We set $\wh T = T \cup \partial T$ and $\wh T_{x,y} = T_{x,y} \cup \partial T_{x,y}\,$.
We put the following topology on $\wh T\,$: it is discrete on the vertex set, 
and a neighbourhood base of $\xi \in \partial T$ is given by the collection of all 
$\wh T_{x,y}$ which contain a ray that represents $\xi$. 
(Here, we may fix $x$ and vary only $y \ne x$.) 
The resulting space is metrizable. It is compact precisely when $T$ is locally finite, but
otherwise, it is not complete. This can be overcome as follows. 
For each vertex  $x$ with infinite degree -- following an idea of 
{\sc Soardi}~\cite{CaSoWo} -- we add a boundary point as follows: we introduce
a new \emph{improper vertex} $x^*$, the \emph{shadow} of $x$, and we set 
$T^* = \{ x^* : x\in T\,,\;\dg(x) = \infty \}$, as well as $\partial^*T = T^* \cup \partial T$.
Analogously, $\partial^* T_{x,y} = T_{x,y}^* \cup \partial T_{x,y}\,.$

Let us write $\ol{T} = T^* \cup \wh T$ and $\ol{T}_{x,y} = T_{x,y}^* \cup \wh T_{x,y}\,.$
Again, $T$ is discrete in $\ol{T}$. A neighbourhood base of end $\xi\in \partial T$ is
now provided by all  $\ol{T}_{x,y}$ which contain a geodesic that represents $\xi$.
A neighbourhood sub-base of $x^* \in T^*$ is given by all $\ol T_{v,x}$, where $v \sim x$.

We now describe convergence of sequences in $\ol{T}$ in this topology. 
We choose a \emph{root} vertex $o \in T$ and write $T_x = T_{o,x}$ for any $x \in T$; 
in particular, $T_o=T$. Throughout everything which follows, it will 
be useful to define the \emph{predecessor} $x^-$ of a vertex $x \ne o$. 
This is the neighbour of $x$ on the geodesic $\pi(o,x)$, and $x$ is a called a 
\emph{forward neighbour} of $x^-$. For $x \in T$, set $|x| = d(o,x)$, the
graph distance. For $\xi \in \partial T$, set $|\xi|=\infty$.

For any pair of elements $v, w \in \wh T$ (i.e., not in $T^*$), 
their \emph{confluent} $v \wedge w$ with respect
to $o$ is the last common element on the geodesics $\pi(o,v)$ and $\pi(o,w)$.
It is a vertex, unless $v=w \in\partial T$, in which case the confluent is
that end. Now, if $(w_n)$ is a sequence in $\wh T$, then
\begin{itemize}
 \item $\; w_n \to x \in T$ when $w_n = x$ for all but finitely $n$.
 \item $\; w_n \to \xi \in \partial T$ when $|w_n \wedge \xi| \to \infty\,.$
 \item $\; w_n \to x^* \in T^*$ when $w_n$ ``rotates'' around $x$, that is,
any $y \sim x$ lies on at most finitely many geodesics $\pi(x,w_n)$. 
\end{itemize}

Finally, if $(x_n^*)$ is a sequence of improper vertices, then
\begin{itemize}
 \item $\; x_n^* \to x^* \in T^*$ or $x_n^* \to \xi \in \partial T$ when
$x_n \to x^*$, resp. $x_n \to \xi$ in the above sense.
\end{itemize}

Now $\ol{T}$ is compact, and $T$ is an open-discrete subset, so that  also $\partial^* T$
is compact. For the understanding of distributions, the next considerations will
be useful. They follow {\sc Cartwright, Soardi and Woess}~\cite{CaSoWo}, see also 
\cite[Thm. 7.13]{W-Markov}.

Let $X$ be a countable set. By a \emph{compactification} of $X$ we mean a compact
Hausdorff space into which $X$ embeds as a discrete, open, dense subset.
Now let $\mathcal{F}$ be a \emph{countable} family of bounded functions 
$f:X \to \R$. Then there is a unique minimal compactification $\ol{X}_{\mathcal{F}}$
of $X$ such that each $f \in \mathcal{F}$ extends to a continuous function on
$\ol{X}_{\mathcal{F}}\,$. Here, ``minimal'' refers to the partial order on compactifications
where one is smaller than the other if the identity mapping on $X$ extends to a continuous
surjection from the bigger to the smaller one, and two compactifications are
considered equal, if that extension is a homeomorphism.

Now let $T$  be a countable tree (or any connected, countable graph) with edge set 
$E = \{ (x,y) \in T^2 :x \sim y\}$.
A function $f: T \to \C$ is called \emph{locally constant,} if the set of edges
along wich $f$ changes its value, 
$$
\{ e = [x,y] \in E : f(x) \ne f(y) \},
$$   
is finite. The vector space $\mathcal{V}$ of all locally constant functions is spanned
by the countable set $\mathcal{F}$ of all those functions in $\mathcal{V}$ which take
values in $\{ 0, 1\}$. Therefore, in the corresponding compactification 
$\ol{T}_{\mathcal{F}}\,$, every locally constant function on $T$ has a continuous extension.
Now, as explained in \cite{CaSoWo}, when the tree (graph) is locally finite,
then one gets the well-known \emph{end compactification.} When the tree $T$ is not locally
finite, we just get the compactification $\ol{T} = T^* \cup \wh T$ described above.

For the purposes of the present note, the improper vertices remain an artifact
which provides compactness, but will not be used in a specific way, except to clarify the view
on the subject.

\medskip

\textbf{B. Distributions on the boundary}

\smallskip

The following material is adapted and extended from \cite{PiWo}.
Consider a function $f \in \mathcal{V}$. Let $E_f$ be the finite 
set of edges along which $f$ changes value. We can choose a finite
subtree $\tau$ of $T$ which contains all those edges as well as
the chosen root $o$. For a vertex $x \in \tau$, write $S_x(\tau)$ 
for the set of forward neighbours $y$ of $x$ in $\tau$ (it may be empty).
The \emph{boundary} $\partial \tau$ of $\tau$ in $T$
consists of those $x \in \tau$ which have a neighbour outside $\tau$.
For each $x \in \partial \tau$, the function $f$ is constant on the 
part of the tree which branches off at $x$, which is
$$
T_x(\tau) = T_x \setminus \bigcup \{ T_y : y \in S_x(\tau) \}
$$
Now let $\partial \mathcal{V}$ be the trace of the vector space $\mathcal{V}$
on $\partial T$, and define $\partial \mathcal{F}$ correspondingly. By the above
considerations, each element of $\partial \mathcal{F}$ is the indicator function of a subset of  
$\partial T$ which can be written as a finite disjoint union of
sets of the form
$$
\partial T_x \setminus \bigcup \{ \partial T_y  : y \in S_x\}\,,
$$
where $x \in T$ and $S_x$ is a finite collection of forward neighbours of $x$ (possibly
empty). 
If $\nu$ is an element in the dual space of $\partial \mathcal{V}$ then it can be seen
as a complex-valued set function on the collection of all those sets, and we call it a \emph{distribution.}
The following is now obvious.

\begin{lem}\label{lem:dist} Any distribution $\nu$ is characterized by the
property that, for every $x \in T$ and finite set $S_x$ of forward neighbours of $x$,
$$
\nu(\partial T_x) = \sum_{y \in S_x} \nu(\partial T_y) + 
\nu\Bigl(\partial T_x \setminus \bigcup \{ \partial T_y  : y \in S_x\}\Bigr)\,.
$$
\end{lem}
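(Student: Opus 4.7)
The plan is to verify both directions of the characterization: first, that every distribution $\nu$ satisfies the stated additivity identity, and second, that any complex-valued set function on the basic sets which satisfies the identity extends uniquely to a linear functional on $\partial \mathcal{V}$.

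For the first (necessity) direction, I would note that distinct forward neighbours $y, y' \in S_x$ span disjoint cones $T_y$ and $T_{y'}$, hence have disjoint boundary arcs. Together with the residual $R := \partial T_x \setminus \bigcup_{y \in S_x} \partial T_y$, the arcs $\{\partial T_y : y \in S_x\}$ therefore form a finite partition of $\partial T_x$. This yields the pointwise indicator identity
$$
\mathbbm{1}_{\partial T_x} = \sum_{y \in S_x} \mathbbm{1}_{\partial T_y} + \mathbbm{1}_R
$$
on $\partial T$. Each summand lies in $\partial \mathcal{F} \subset \partial \mathcal{V}$ by the description of basic sets preceding the lemma (taking $S_x = \emptyset$ shows in particular $\mathbbm{1}_{\partial T_x} \in \partial \mathcal{F}$). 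Applying the linear functional $\nu$ gives the required identity.

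For the converse (sufficiency), given $f \in \partial \mathcal{V}$, I would choose a finite subtree $\tau$ containing $o$ as well as every edge of $E_f$. Then $f$ is constant, equal to some value $c_x$, on each residual $\partial T_x \setminus \bigcup\{\partial T_y : y \in S_x(\tau)\}$ for $x \in \partial \tau$, and these residuals partition $\partial T$. The natural definition is
$$
\nu(f) := \sum_{x \in \partial \tau} c_x\, \nu\Bigl(\partial T_x \setminus \bigcup\{\partial T_y : y \in S_x(\tau)\}\Bigr).
$$
The main obstacle is well-definedness, namely independence of this value from the choice of $\tau$. Since any two admissible subtrees admit a common refinement, it suffices to check that enlarging $\tau$ by adjoining the forward neighbours of a single boundary vertex $x \in \partial \tau$ preserves the sum, and this is exactly the content of the identity in the lemma. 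Linearity of the resulting extension is then immediate.
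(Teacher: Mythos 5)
Your proposal is correct and follows essentially the same route as the paper, which states the lemma without proof (``The following is now obvious'') on the strength of exactly the observations you make explicit: every element of $\partial \mathcal{F}$ is the indicator of a finite disjoint union of residual arcs $\partial T_x \setminus \bigcup \{\partial T_y : y \in S_x\}$, so applying the linear functional $\nu$ to the indicator partition gives the stated identity, while conversely that identity is precisely the refinement-invariance needed for the defining sum in \eqref{eq:intnu} to be independent of the choice of the finite subtree $\tau$. You have merely supplied the details (disjointness of the cones, common refinement of two admissible subtrees, the one-vertex enlargement step) that the authors left implicit, so there is nothing to contrast.
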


In particular, if $T$ is locally finite, then $\nu$ can be described as
a set function on all boundary arcs such that 
\begin{equation}\label{eq:nu}
\nu(\partial T_x) = \sum_{y: y^-=x} \nu(\partial T_y) \; \text{ for every }\; x \in T.
\end{equation}
In \cite{PiWo}, we have defined distributions analogously in the non-locally finite
case, requiring in that case that the sum in \eqref{eq:nu} converges absolutely.
In this case, let us call $\nu$ a \emph{strong distribution} here. For all results 
of \cite{PiWo} as well as the present note, the 
distributions actually involved are always strong. 

In particular, when $\nu$ is non-negative real, then it is not only strong,
but extends to a finite, $\sigma$-additive Borel measure on $\partial T$, as explained
in \cite[3.10]{PiWo}. As mentioned there, when $\nu$ is a complex-valued distribution,
a necessary and sufficient condition for its extendability to a $\sigma$-additive, signed
measure on the Borel $\sigma$-algebra of $\partial T$ is that 
there is $M < \infty$ such that for any sequence
of pairwise disjoint boundary arcs $\partial T_{x_n}\,$, one has 
\begin{equation}\label{eq:M}
\sum_n |\nu(\partial T_{x_n})| \le M \,.
\end{equation}
This is an easy extension of the corresponding condition in the
locally finite case of {\sc Cohen, Colonna and Singman}~\cite{CoCoSi}.

For any distribution $\nu$, we now write 
$$
\nu(\varphi) = \int_{\partial T} \varphi\, d\nu\quad \text{for}\quad \varphi \in \partial \mathcal{V}\,.
$$
By the above, given $\varphi$, there are a finite subtree $\tau$ of $T$ containing $o$
and constants $\varphi_x\,$, $x \in \partial \tau$, such that
\begin{equation}\label{eq:intnu}
\begin{gathered}
\varphi \equiv \varphi_x \quad \text{on} \quad 
\partial T_x(\tau) = \partial T_x \setminus \bigcup \{ \partial T_y  : y \in S_x(\tau) \}\,,\AND\\
\int_{\partial T} \varphi\, d\nu = \sum_{x \in \partial \tau} \varphi_x\, 
  \nu\bigl(\partial T_x(\tau)\bigr)\,.
\end{gathered}
\end{equation}
By construction, this does not depend on the specific choice of the finite tree $\tau$
associated with $\varphi$.
If $\nu$ extends to a $\sigma$-additive complex Borel measure on $\partial T$, then the
integral is an ordinary one in the sense of Lebesgue.

\medskip

\textbf{C. Self-adjointness of the transition operator}

\smallskip

With the action defined by \eqref{eq:Pf}, the transition operator $P$ is
self-adjoint on the Hilbert space
$\ell^2(T,\mm)$ of all functions $f:T \to \C$
with $\langle f, f \rangle < \infty$, where
$$
\langle f, g \rangle = \sum_{x} f(x)\ol{g(x)}\,\mm(x)\,,
$$ 
with the measure $\mm$ on $T$ as follows: 
$$
\text{for }\; x \in T\;\text{ with }\; \pi(o,x) = [x_0\,,x_1\,,\dots, x_k]\,,\quad
\mm(x)= \frac{p(x_0\,,x_1) \cdots p(x_{k-1}\,,x_k)}{p(x_1\,,x_0) \cdots p(x_k\,,x_{k-1})}.
$$
In particular, $\mm(o)=1$. Self-adjointness is a consequence of  \emph{reversibility:} 
$\mm(x)p(x,y) = \mm(y)p(y,x)$ for all $x,y$.
The norm (spectral radius) of $P$ is 
$$
\rho = \rho(P) = \limsup_{n \to \infty} p^{(n)}(x,y)^{1/n}
$$
(independent of $x$ and $y$), where $p^{(n)}(x,y)$ is the $(x,y)$-element 
of the matrix power $P^n$. Since trees are bipartite, the spectrum
$\spec(P) \subset [-\rho\,,\,\rho]$ is symmetric around the origin.

\emph{Positive} $\la$-harmonic functions exist if and only if $\la \ge \rho$ (real).
At this point, we state a warning: when viewing $\la$-harmonic functions as
``eigenfunctions'' of $P$,  they are not considered as eigenfunctions of
the above self-adjoint operator on $\ell^2(T,\mm)$. As a matter of fact, besides
possibly for $\la = \pm \rho$ in very specific situations, our $\la$-harmonic functions 
will usually not belong to
$\ell^2(T,\mm)$. In a variety of known cases, $\spec(P)$ contains no eigenvalues, that is, there
is no point spectrum on $\ell^2(T,\mm)$. In any case, our methods and results do not 
cover the case where $\la \in \spec(P) \setminus\{\pm \rho \}$.

\section{The general integral representation}\label{sec:intrep}

We now fix a candidate eigenvalue $\la \in \C$ and we suppose that we can equip
the \emph{oriented} edge set $E(T) = \{ (x,y) \in T^2 : x \sim y\}$ of $T$  with
\emph{$\la$-weights} $f(x,y) \in \C$ satisfying the following properties
for every $x \in T$ and every $y$ with $x \sim y$.

\begin{gather}
f(x,y)f(y,x) \ne 1 \quad 
\text{for all pairs of neighbours }\; x,y\,, \label{eq:ne1}\\
u(x,x) \ne \la\,,\quad\text{where}\quad u(x,x) = \sum_v p(x,v)f(v,x)\,,\label{eq:nela}\\
\la \, f(x,y) = p(x,y) + \Bigl(u(x,x)-p(x,y)f(y,x)\Bigr)f(x,y)\,. \label{eq:la}
\end{gather}

If $\dg(x) = \infty$ then we require that the sum in \eqref{eq:nela}
converges absolutely. Note that it follows from \eqref{eq:la} that $f(x,y) \ne 0$ for all
pairs of neighbours. The above three axioms arise by mimicking the main properties of
the natural Green weights, which will be discussed at the end of this section.

Using these weights,  for arbitrary $x, y \in T$ we define
\begin{equation}\label{eq:f}
f(x,y) = f(x_0\,,x_1) f(x_1\,,x_2)\cdots f(x_{k-1}\,,x_k)\,, \quad \text{if}\quad
\pi(x,y) = [x_0\,,\dots, x_k]\,.
\end{equation}
In particular, $f(x,x)=1$. 

\begin{lem}\label{lem:Pf}
For fixed $y$, the function $x \mapsto f(x,y)$ satisfies
$$
Pf(x,y) = \la \, f(x,y) \quad \text{if}\;x \ne y\,,\AND
Pf(y,y) = u(y,y)\,.
$$   
\end{lem}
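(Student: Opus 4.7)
The plan is to evaluate $Pf(\cdot,y)(x) = \sum_{v\sim x} p(x,v)\,f(v,y)$ directly from \eqref{eq:Pf} and the multiplicative definition \eqref{eq:f}, splitting into the cases $x=y$ and $x\neq y$.

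If $x=y$, then for each neighbour $v$ of $y$ the geodesic $\pi(v,y)$ consists of the single edge $[v,y]$, so the product in \eqref{eq:f} collapses to the single edge weight $f(v,y)$. Hence $Pf(\cdot,y)(y) = \sum_{v\sim y} p(y,v)\,f(v,y)$, which is the definition of $u(y,y)$ in \eqref{eq:nela}; absolute convergence in the infinite-degree case is built into that axiom.

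Now suppose $x\neq y$, and let $x^*$ be the unique neighbour of $x$ lying on $\pi(x,y)$. I would partition the sum over $v \sim x$ according to whether $v=x^*$ or not. For $v=x^*$, the geodesic $\pi(x^*,y)$ is obtained from $\pi(x,y)$ by removing its initial vertex $x$, so by \eqref{eq:f} one has $f(x,y) = f(x,x^*)\,f(x^*,y)$. For $v\neq x^*$, the path $[v,x,x^*,\ldots,y]$ is the geodesic $\pi(v,y)$, so $f(v,y) = f(v,x)\,f(x,y)$. Using $f(x,x^*)\neq 0$ (a consequence of \eqref{eq:la}) to write $f(x^*,y) = f(x,y)/f(x,x^*)$ and factoring out $f(x,y)$, the computation reduces to
$$
Pf(\cdot,y)(x) = f(x,y) \left[\frac{p(x,x^*)}{f(x,x^*)} + u(x,x) - p(x,x^*)\,f(x^*,x)\right].
$$
Dividing the axiom \eqref{eq:la} for the pair $(x,x^*)$ through by $f(x,x^*)$ identifies the bracket as $\la$, yielding the claim.

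The only point demanding care is the bookkeeping when $\dg(x) = \infty$: one has to justify the rearrangement of $\sum_{v\sim x} p(x,v)\,f(v,y)$ and, in particular, of $\sum_{v\sim x,\,v\neq x^*} p(x,v)\,f(v,x)$. But this last sum differs from the one defining $u(x,x)$ in \eqref{eq:nela} by exactly one term, so its absolute convergence is inherited directly from that axiom, and the manipulation above is legitimate.
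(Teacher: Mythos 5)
Your proof is correct and takes essentially the same route as the paper's: split $Pf(x,y)$ at the neighbour of $x$ lying on $\pi(x,y)$, use the multiplicativity \eqref{eq:f} to write $f(v,y)=f(v,x)f(x,y)$ for the remaining neighbours, and invoke \eqref{eq:la} — the paper merely applies \eqref{eq:la} directly to the product $f(x,x_1)f(x_1,y)$ instead of dividing through by $f(x,x_1)$, and your convergence remark matches the paper's note that absolute convergence in \eqref{eq:nela} is what makes the regrouping legitimate. One cosmetic caveat: the symbol $x^*$ is already reserved in the paper for the improper vertex (shadow) of $x$, so you should rename your neighbour, e.g.\ $x_1$ as in the paper.
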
 

\begin{proof}
The second identity is the definition
\eqref{eq:nela} of $u$. For the first identity, let $\pi(x,y)$ be as in \eqref{eq:f}. Consider the neighbours $x=x_0$ and $x_1$. Then
 \eqref{eq:nela} and \eqref{eq:la} yield
\[
\begin{aligned} 
Pf(x,y) &= p(x,x_1) f(x_1\,,y) + \sum_{v \ne x_1} p(x,v) f(v,x) f(x,y) \\
&= p(x,x_1)f(x_1\,,y) + \Bigl( u(x,x) - p(x,x_1)f(x_1,x) \Bigr) f(x,x_1)f(x_1,y)\\
& = \la\, f(x,x_1) f(x_1,y) = \la \, f(x,y)\,,
\end{aligned}
\]
as stated.
\end{proof}

Note that absolute convergence of the sum in \eqref{eq:nela} is crucial for the Lemma.
It is this property that further on will give us strong distributions. Thanks to \eqref{eq:nela} we can
set 
\begin{equation}\label{eq:ug}
g(x,y) = \frac{f(x,y)}{\la - u(y,y)}\,,\quad x,y \in T\,.
\end{equation}
Then we see from Lemma \ref{lem:Pf} that the function $x \mapsto g(x,y)$ satisfies the resolvent equation
\begin{equation}\label{eq:Pg}
Pg(x,y) = \la\, g(x,y) - \de_x(y)\,.
\end{equation}

The following Lemma shows how the transition probabilities 
can be recovered
from the weights $f(x,y)$, compare with \cite{Ca} for the locally finite case with 
standard non-negative Green weights.

\begin{lem}\label{lem:recover} For $x \in T$ and $y\sim x$,
$$
\begin{aligned}
g(x,x)p(x,y) &= \frac{f(x,y)}{1-f(x,y)f(y,x)}\,,\\[3pt]
g(x,x)g(y,y) &= g(x,y)\biggl(\frac{1}{p(x,y)} + g(y,x)\biggr), \quad\text{and}\\[3pt]
\la\, g(x,x) &= 1 + \sum_{y: \,y \sim x}   \frac{f(x,y)f(y,x)}{1-f(x,y)f(y,x)}\,.
\end{aligned}
$$ 
When $\dg(x)=\infty$, the last sum converges absolutely.
\end{lem}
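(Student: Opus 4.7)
The first identity is essentially a rewriting of axiom \eqref{eq:la}. My plan is to rearrange
$\la f(x,y) = p(x,y) + \bigl(u(x,x)-p(x,y)f(y,x)\bigr)f(x,y)$
to isolate $p(x,y)$ on one side, collecting the $f(x,y)f(y,x)$ term: this gives
$(\la - u(x,x))f(x,y) = p(x,y)\bigl(1-f(x,y)f(y,x)\bigr)$. Dividing by $(\la-u(x,x))$, which is nonzero by \eqref{eq:nela}, and recalling $g(x,x)=1/(\la - u(x,x))$ immediately yields the first formula. Note that this also shows $1-f(x,y)f(y,x)\ne 0$ is automatic here, in agreement with \eqref{eq:ne1}.

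For the second identity, the key observation is that $g(x,y) = f(x,y)\,g(y,y)$ and $g(y,x) = f(y,x)\,g(x,x)$, directly from the definition \eqref{eq:ug} (since $f(x,y)$ along the one-edge geodesic is just the weight on the oriented edge). Multiplying these gives $g(x,y)g(y,x) = f(x,y)f(y,x)\,g(x,x)g(y,y)$, so the identity to be proved becomes
\[
g(x,x)g(y,y)\bigl(1-f(x,y)f(y,x)\bigr) \;=\; \frac{g(x,y)}{p(x,y)} \;=\; \frac{f(x,y)\,g(y,y)}{p(x,y)}.
\]
Cancelling the nonzero factor $g(y,y)$ reduces this to the first identity, so the second follows immediately.

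The third identity will come from the resolvent equation \eqref{eq:Pg} evaluated at $y=x$, namely $Pg(\cdot,x)(x) = \la\, g(x,x) - 1$. Expanding the left side gives $\sum_{y\sim x} p(x,y)\,g(y,x)$, and using $g(y,x)=f(y,x)\,g(x,x)$ turns this into $g(x,x)\sum_{y\sim x} p(x,y)f(y,x) = u(x,x)\,g(x,x)$; of course this just recovers the definition of $g(x,x)$. To reach the stated form, I instead substitute $g(x,x)p(x,y) = f(x,y)/(1-f(x,y)f(y,x))$ from the first identity into $\sum_{y\sim x} p(x,y)\,g(x,x)\,f(y,x)$, which produces the series
$\sum_{y\sim x} f(x,y)f(y,x)/\bigl(1-f(x,y)f(y,x)\bigr)$, and adding $1$ gives $\la\,g(x,x)$.

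The only delicate point, and the one I would flag as the main technical concern, is absolute convergence of the last sum when $\dg(x)=\infty$. This is not something one proves from scratch: it is built into axiom \eqref{eq:nela}, which postulates absolute convergence of $\sum_v p(x,v)f(v,x)$. Since each term in the sum I obtain equals $p(x,y)f(y,x)\,g(x,x)\,f(x,y)/(\text{nonzero factor})$, and the factor $f(x,y)$ is bounded uniformly in $y\sim x$ by, say, $|p(x,y)|/|\la - u(x,x)|$ times a bounded expression via the first identity, absolute convergence transfers from \eqref{eq:nela}; I would simply argue that $|f(x,y)f(y,x)/(1-f(x,y)f(y,x))| \le C\,|p(x,y)f(y,x)|$ on a cofinite set of neighbours (using that $f(x,y)\to 0$ along any enumeration forced by absolute convergence of $u(x,x)$), hence the sum converges absolutely by comparison.
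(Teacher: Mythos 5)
Your proposal is correct and follows essentially the same route as the paper: the same rearrangement of \eqref{eq:la} for the first identity, the same factorization via $g(x,y)=f(x,y)g(y,y)$ and $g(y,x)=f(y,x)g(x,x)$ for the second (merely run in reverse), and the same substitution of the first identity into $g(x,x)\,u(x,x)=\la\,g(x,x)-1$ for the third. Your convergence argument at the end is needlessly roundabout, though: multiplying the first identity by $f(y,x)$ gives the \emph{exact} term-by-term equality
$\dfrac{f(x,y)f(y,x)}{1-f(x,y)f(y,x)} = g(x,x)\,p(x,y)f(y,x)$
for every $y\sim x$, so absolute convergence is immediate from the absolute convergence postulated in \eqref{eq:nela}, with no constant $C$, no cofinite set of neighbours, and no appeal to $f(x,y)\to 0$.
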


\begin{proof}
We can rewrite \eqref{eq:la} as
$$
p(x,y) \bigl(1 - f(x,y)f(y,x) \bigr) = f(x,y) \bigl( \la - u(x,x) \bigr)\,.
$$
Since $\la - u(x,x) = 1/g(x,x)$, the first identity follows, and
with $g(x,y) = f(x,y)g(y,y)$ as well as $g(y,x) = f(y,x)g(x,x)$, we get
$$
g(x,y)\Bigl(\frac{1}{p(x,y)} + g(y,x)\Bigr) = g(x,x)g(y,y)\, 
\underbrace{f(x,y)\Bigl(\frac{1}{g(x,x)p(x,y)} + f(y,x)\Bigr)}_{\displaystyle = 1}.   
$$ 
This is the second identity. We now multiply the first identity  with $f(y,x)$. 
The sum over all neighbours $y$ of $x$ is absolutely convergent by assumption, so that
we have indeed abolute convergence of the right hand side of the third identity,
and 
$$
\sum_{y: y \sim x} \frac{f(x,y)f(y,x)}{1-f(x,y)f(y,x)} = g(x,x)\,u(x,x) 
= \la\, g(x,x) - 1
$$ 
by the definition of $g(x,x)$. 
\end{proof}

We define the \emph{$\la$-Poisson kernel} associated with our weights by
$$
k(x,w) = \frac{f(x,x \wedge w)}{f(o,x \wedge w)} \,,\quad x \in T\,,\; w \in \wh T\,.
$$
Thus,
\begin{equation}\label{eq:kern}
k(x,w) = \frac{f(x,v)}{f(o,v)} = \frac{g(x,v)}{g(o,v)} \quad \text{for every vertex }\;
v \in \pi(x \wedge w,w).
\end{equation}
By our assumptions, $Pk(\cdot,w)$ is well defined as a function of the first variable.
That is, even at vertices with infinite degree, the involved sum is absolutely convergent,
and for $\xi \in \partial T$,
\begin{equation}\label{eq:Pk}
\sum_{y \sim x} p(x,y) \,k(y,\xi)= \la\, k(x,\xi) \quad \text{for every}\; x \in T.
\end{equation}
Now let $x \in T$ and $\pi(o,x) = [o=x_0\,,x_1\,,\dots, x_k=x]$.
Then $x \wedge \xi \in \{ x_0\,,x_1\,,\dots, x_k\}$ for every $\xi \in \partial T$,
and 
$$
k(x,\xi) = k(x,x_i) \quad\text{when}\quad 
\begin{cases} \xi \in \partial T_{x_i}\setminus \partial T_{x_{i+1}}\,,&i \le k-1,\\
              \xi \in \partial T_{x_k}\,,&i=k\,.
\end{cases}
$$  
Thus, $\varphi = k(x,\cdot)$ is locally constant on $\partial T$, and 
we can use $\pi(o,x)$ as a tree $\tau$ to which \eqref{eq:intnu} applies. 
Then we have the following.

\begin{pro}\label{pro:transf}
If $\nu$ is a strong distribution $\nu$ on $\partial T$, its \emph{Poisson transform} 
$$
h(x) = \int_{\partial T} k(x,\xi)\, d\nu(\xi)
$$
is a $\la$-harmonic function, and
\begin{equation}\label{eq:int'}
\begin{aligned}
h(x) 
&= \sum_{i=0}^{k-1} k(x,x_i) 
\Bigl(\nu(\partial T_{x_i}) - \nu(\partial T_{x_{i+1}})\Bigr) + k(x,x)\,\nu(\partial T_{x})\\
&= k(x,o)\nu(\partial T) + \sum_{i=1}^k \Bigl(k(x,x_i)-   k(x,x_{i-1})\Bigr) \nu(\partial T_{x_i})\,.
\end{aligned}
\end{equation}
\end{pro}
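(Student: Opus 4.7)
The plan is to derive the two integral identities directly from the general integration rule \eqref{eq:intnu}, and then to verify $\la$-harmonicity by matching, arc by arc, the coefficients in the expansions of $Ph(x)$ and $\la h(x)$ as finite linear combinations of $\nu$-values of boundary arcs.

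For the formulas I would take the finite subtree $\tau = \pi(o,x) = [o = x_0, x_1, \ldots, x_k = x]$, for which $S_{x_i}(\tau) = \{x_{i+1}\}$ for $i < k$ and $S_{x_k}(\tau) = \emptyset$. By \eqref{eq:kern}, the locally constant function $\xi \mapsto k(x,\xi)$ takes the value $k(x,x_i)$ on $\partial T_{x_i}(\tau) = \partial T_{x_i} \setminus \partial T_{x_{i+1}}$ for $i < k$ and the value $k(x,x_k)$ on $\partial T_{x_k}$. Plugging this into \eqref{eq:intnu} expresses $h(x)$ as a finite sum, and Lemma \ref{lem:dist}, applied with $S_{x_i} = \{x_{i+1}\}$, rewrites each $\nu(\partial T_{x_i}(\tau))$ as $\nu(\partial T_{x_i}) - \nu(\partial T_{x_{i+1}})$; this yields the first identity of \eqref{eq:int'}, and a telescoping (Abel) rearrangement gives the second.

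For $\la$-harmonicity I would substitute the second form of \eqref{eq:int'}, taken along $\pi(o,y)$, into each $h(y)$ in $Ph(x) = \sum_{y \sim x} p(x,y)\,h(y)$ and compare coefficients of the resulting boundary arcs with those of $\la h(x)$. For the arcs $\nu(\partial T_{x_i})$ with $0 \le i \le k-1$ the matching is immediate: only vertices $x_j$ with $j < k$ (hence different from $x$) appear in the corresponding coefficient, so Lemma \ref{lem:Pf} collapses it to the corresponding coefficient of $\la h(x)$. The delicate case is at the tip, where Lemma \ref{lem:Pf} fails at $x$ itself: the expansion of $Ph(x)$ produces contributions both to $\nu(\partial T_x)$ and to $\nu(\partial T_y)$ for every forward neighbour $y$ of $x$, whereas $\la h(x)$ carries a single coefficient on $\nu(\partial T_x)$. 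Using strongness of $\nu$ to write $\nu(\partial T_x) = \sum_{y:\,y^- = x} \nu(\partial T_y)$ absolutely, I would compare coefficients on each individual $\nu(\partial T_y)$. Lemma \ref{lem:recover} shows that $p(x,y)\bigl(k(y,y) - k(y,x)\bigr)$ is the $y$-independent constant $1/\bigl(g(x,x)\,f(o,x)\bigr)$, and substituting $1/g(x,x) = \la - u(x,x)$ together with the definition of $u(x,x)$ reduces the required equality to a short algebraic identity equivalent to axiom \eqref{eq:la}.

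The principal obstacle is justifying absolute convergence of $\sum_{y:\,y^- = x} p(x,y)\,h(y)$ when $\dg(x) = \infty$, which is what legitimises the termwise coefficient matching. The $y$-dependence of $h(y)$ beyond a uniform part is confined to the single tip term $\bigl(k(y,y) - k(y,x)\bigr)\nu(\partial T_y)$; because Lemma \ref{lem:recover} makes its $p(x,y)$-weighted prefactor $y$-independent, absolute convergence of this piece reduces exactly to $\sum_{y:\,y^- = x} |\nu(\partial T_y)| < \infty$, which is precisely the strong hypothesis on $\nu$. Combined with the absolute convergence built into \eqref{eq:nela} and hence into Lemma \ref{lem:Pf}, this justifies every exchange of summation needed in the proof.
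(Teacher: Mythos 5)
Your proof is correct, and it is essentially the paper's argument in different bookkeeping. The derivation of \eqref{eq:int'} via \eqref{eq:intnu} and Lemma \ref{lem:dist} with $\tau=\pi(o,x)$ is exactly what the paper does (tacitly, in the lines preceding the Proposition). For $\la$-harmonicity, the paper first extracts the one-step recursion \eqref{eq:recurs}, i.e. $h(y)=f(y,x)h(x)+\bigl(k(y,y)-k(y,x)\bigr)\nu(\partial T_y)$ for $y^-=x$, and substitutes it (and its analogue at $x^-$) into $Ph(x)$; you instead match coefficients of boundary arcs directly, using Lemma \ref{lem:Pf} to collapse the coefficients of $\nu(\partial T_{x_i})$ for $i\le k-1$. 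These are the same computation in two guises: your remark in the convergence paragraph, that the $y$-dependence of $h(y)$ sits in $f(y,x)$ plus the single tip term, \emph{is} \eqref{eq:recurs}. At the tip, both proofs run on identical fuel: the first identity of Lemma \ref{lem:recover} makes $p(x,y)\bigl(k(y,y)-k(y,x)\bigr)=1\big/\bigl(g(x,x)f(o,x)\bigr)$ independent of $y$, strongness gives $\nu(\partial T_x)=\sum_{y^-=x}\nu(\partial T_y)$ with absolute convergence, and axiom \eqref{eq:la} (applied with $y=x^-$) closes the algebra; your convergence justification, \eqref{eq:nela} for the $f(y,x)h(x)$ part and strongness for the tip part, is likewise the paper's. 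What your organisation buys is transparency about why only the tip is delicate (Lemma \ref{lem:Pf} fails only at the pole $x$); what the paper's buys is that \eqref{eq:recurs} stands alone as a reusable identity, which is then invoked verbatim for the uniqueness half of Theorem \ref{thm:general}. Two small points: calling $f(y,x)h(x)$ a ``uniform part'' is not literally accurate, since $f(y,x)$ varies with $y$ -- you correctly repair this by invoking \eqref{eq:nela} -- and you should state explicitly that the base case $x=o$ (where $k=0$, there is no $x^-$, and the tip arc is $\partial T_o=\partial T$) is handled by the same tip mechanism, as the paper does separately via \eqref{eq:Pho}.
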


\begin{proof} The proof of $\la$-harmonicity of $h$ is obvious when 
$T$ is locally finite. Otherwise, some care is needed, and we go through the details 
in order to show the necessity of the assumption that the distribution $\nu$ 
be strong. First of all,  we 
show that $Ph(o) = \la\,h(o)$. 
By \eqref{eq:int'}, if $x \sim o$, 
\begin{equation}\label{eq:Pho}
h(x) = f(x,o)\nu(\partial T) + \left(\frac{1}{f(o,x)} - f(x,o)\right)\nu(\partial T_x).
\end{equation}
Therefore 
$$
\begin{aligned}
Ph(o) &= u(o,o)\, \nu(\partial T) + \sum_{x\sim o}
p(o,x) \;\frac{1-f(o,x)f(x,o)}{f(o,x)} \,\nu(\partial T_x)\\
&=  \sum_{x\sim o} \left(u(o,o) + p(o,x) \;\frac{1-f(o,x)f(x,o)}{f(o,x)}\right) \nu(\partial T_x)\\
&= \la \sum_{x\sim o} \nu(\partial T_x) = \la\, \nu(\partial T) = \la\, h(o)
\end{aligned}
$$
by \eqref{eq:la}. In case $\dg(x) = \infty$, we needed absolute convergence of the
involved series. Similarly, let $x \ne o$. Then \eqref{eq:int'} yields the formula
\begin{equation}\label{eq:recurs}
 h(x) = f(x,x^-)h(x^-) + \frac{1-f(x,x^-)f(x^-,x)}{f(o,x)}\;\nu(\partial T_x)\,,
\end{equation}
that will also be important further below. To prove \eqref{eq:recurs}, we first observe that it
is the same as \eqref{eq:Pho} when $x \sim o$. Now let $k \ge 2$ in \eqref{eq:int'}, and note
that for $i \le k-1$ we have $k(x,x_i) = f(x,x_i)/f(o,x_i)= f(x,x^-)k(x^-,x_i)$, 
with $x^- = x_{k-1}\,$. Then, using the first of the two identities of \eqref{eq:int'},
$$
\begin{aligned}
h(x) &= f(x,x^-) \sum_{i=0}^{k-2} k(x^-,x_i) 
\Bigl(\nu(\partial T_{x_i}) - \nu(\partial T_{x_{i+1}})\Bigr)\\ &\qquad + 
f(x,x^-) k(x^-,x^-) \Bigl(\nu(\partial T_{x^-}) - \nu(\partial T_x)\Bigr)
+ k(x,x)\,\nu(\partial T_{x})\\
&= f(x,x^-) h(x^-) - \frac{f(x,x^-)}{f(o,x^-)}\nu(\partial T_x) + \frac{1}{f(o,x)}\nu(\partial T_x)\,.
\end{aligned}
$$
Since $f(o,x)=f(o,x^-)f(x^-,x)$, this reduces to the desired formula.
For the following, we also need \eqref{eq:recurs} for $y$ with $y^-=x$. Absolute convergence
in the first of the following identities is justified a posteriori, and the first
identity of Lemma \ref{lem:recover} is used for the underbraced as well as for the
overbraced term, and again in the very last step. 
$$
\begin{aligned}
Ph(x) &= p(x,x^-)h(x^-) + \sum_{y^-=x} p(x,y)f(y,x)h(x) \\
& \qquad\qquad\qquad + \frac{1}{f(o,x)} \sum_{y^-=x} \underbrace{p(x,y)\frac{1-f(x,y)f(y,x)}{f(x,y)}}_{
\displaystyle 1/g(x,x)} \nu(\partial T_y)\\[-4pt]
&= \frac{p(x,x^-)}{f(x,x^-)}h(x) 
- \frac{1}{f(o,x)}\overbrace{p(x,x^-)\frac{1-f(x,x^-)f(x^-,x)}{f(x,x^-)}}^{} \nu(\partial T_x)\\
& \qquad\qquad\qquad +
u(x,x)h(x) - p(x,x^-)f(x^-,x)h(x) + \frac{1}{g(o,x)} \nu(\partial T_x)\\
&= \left( p(x,x^-) \frac{1-f(x,x^-)f(x^-,x)}{f(x,x^-)} + u(x,x)\right) h(x) = \la\, h(x)\,.
\end{aligned} 
$$
In the second identity we made use of the assumption that $\nu$ is strong.
\end{proof}

The proof of the following is very similar to \cite[Thm. 9.37]{W-Markov}: 
we rewrite its main part here
to take care of absolute convergence in the non-locally finite case.

\begin{thm}\label{thm:general}
Suppose that we have edge weights $f(x,y)$ which satisfy \eqref{eq:ne1} -- \eqref{eq:la}.
A function $h: T \to \C$ satisfies $Ph = \la \cdot h$ if and only
if it is of the form
$$
h(x) = \int_{\partial T} k(x,\xi)\,d\nu(\xi)\,,
$$
where $\nu$ is a strong complex distribution on $\partial T\,.$ The distribution $\nu$ is 
determined by $h$, that is, $\nu = \nu^h$, where  
$$
\nu^h(\partial T)= h(o) \AND 
\nu^h(\partial T_x) = f(o,x)\,\frac{h(x)-f(x,x^-)h(x^-)}{1-f(x^-,x)f(x,x^-)}\,,
\; x \ne o\,.
$$
\end{thm}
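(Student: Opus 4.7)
The ``if'' direction is Proposition \ref{pro:transf}, so my plan focuses on the ``only if'' direction together with uniqueness. The shape of the formula for $\nu^h$ is dictated by the recursion \eqref{eq:recurs}: if $h$ were the Poisson transform of some strong $\nu$, then \eqref{eq:recurs} could be solved for $\nu(\partial T_x)$, giving exactly the stated expression. So I would take this as the \emph{definition} of $\nu^h$ and verify (a) that it extends to a strong distribution on $\partial T$, and (b) that its Poisson transform recovers $h$.

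For (a), by Lemma \ref{lem:dist} (and the definition of ``strong'') it suffices to show, with absolute convergence,
\[
\nu^h(\partial T_x) \;=\; \sum_{y:\,y^-=x} \nu^h(\partial T_y)\quad\text{for every }x\in T,
\]
plus the base-point identity $\nu^h(\partial T) = \sum_{y\sim o}\nu^h(\partial T_y) = h(o)$. Using $f(o,y) = f(o,x)f(x,y)$ and the first identity of Lemma \ref{lem:recover}, each summand rewrites as
\[
\nu^h(\partial T_y) \;=\; f(o,x)\,g(x,x)\,p(x,y)\bigl(h(y) - f(y,x)h(x)\bigr).
\]
Summing over the forward neighbors $y$ of $x$, absolute convergence is inherited from the two assumed absolutely convergent series $\sum_y p(x,y)h(y) = \lambda h(x) - p(x,x^-)h(x^-)$ (built into the definition of $\lambda$-harmonicity via \eqref{eq:Pf}) and $\sum_y p(x,y)f(y,x) = u(x,x) - p(x,x^-)f(x^-,x)$ (axiom \eqref{eq:nela}). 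Substituting these and using the axiom \eqref{eq:la} in the form $\lambda - u(x,x) = 1/g(x,x)$ collapses the sum, after a short manipulation involving $g(x,x)p(x,x^-) = f(x,x^-)/(1-f(x,x^-)f(x^-,x))$, to exactly $\nu^h(\partial T_x)$. The case $x=o$ is the same computation without the $x^-$ term.

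For (b), let $\tilde h$ denote the Poisson transform of $\nu^h$. By \eqref{eq:recurs} applied to $\tilde h$,
\[
\tilde h(x) \;=\; f(x,x^-)\tilde h(x^-) + \frac{1-f(x,x^-)f(x^-,x)}{f(o,x)}\,\nu^h(\partial T_x),
\]
and by construction of $\nu^h$ the same recursion holds with $\tilde h$ replaced by $h$. At the root, $\tilde h(o) = k(o,o)\,\nu^h(\partial T) = h(o)$. Induction on $|x|$ therefore gives $\tilde h = h$. Uniqueness of $\nu^h$ follows because any strong distribution is determined by its values on the arcs $\partial T_x$, and \eqref{eq:recurs} shows those values are forced by $h$.

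The main obstacle is the computation in (a): juggling the three axioms, Lemma \ref{lem:recover}, and the absolute convergence bookkeeping in the non-locally finite setting. Once this additivity is in hand, both (b) and uniqueness are essentially formal consequences of \eqref{eq:recurs}.
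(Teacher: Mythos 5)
Your proposal is correct and follows essentially the same route as the paper: your direct computation of $\sum_{y:\,y^-=x}\nu^h(\partial T_y)$ via the first identity of Lemma \ref{lem:recover}, the axioms \eqref{eq:nela}--\eqref{eq:la} and the built-in absolute convergence is just the paper's intermediate identity \eqref{eq:hidentity} regrouped, and your induction on $|x|$ via \eqref{eq:recurs} is the recursive form of the paper's telescoping verification that $\int_{\partial T}k(x,\xi)\,d\nu^h(\xi)=h(x)$. Nothing is missing; the uniqueness argument via \eqref{eq:recurs} (noting $f(o,x)\ne 0$ and $1-f(x,x^-)f(x^-,x)\ne 0$, and that a strong distribution is determined by its values on the arcs $\partial T_x$) is exactly how the paper concludes as well.
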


\begin{proof} We first show that if $h$ is $\la$-harmonic, then $\nu^h$ as defined in the 
theorem is indeed a strong distribution, and $h$ is its Poisson transform.
We start with the identity
$$
\la\,g(x,x)h(x) = \sum_{y: y \sim x}  g(x,x) p(x,y) h(y)\,,
$$ 
and recall that the sum on the right hand side is assumed to converge absolutely  when $\dg(x)=\infty$.
Using Lemma \ref{lem:recover}, we rewrite this as
$$ 
\biggl( 1 + \sum_{y: y \sim x}   \frac{f(x,y)f(y,x)}{1-f(x,y)f(y,x)}\biggr) h(x) =
\sum_{y: y \sim x} \frac{f(x,y)   
}{1-f(x,y)f(y,x)}\, h(y)\,.
$$
Since the involved sums converge absolutely, we can regroup the terms and
get 
\begin{equation}\label{eq:hidentity}
h(x) = \sum_{y\,:\,y \sim x} f(x,y)\;\frac{h(y)-f(y,x)h(x)}{1-f(x,y)f(y,x)}\,.
\end{equation}
Convergence is again absolute when $\dg(x)=\infty\,$. 

For $x = o$, the last identity says that
$\nu^h(\partial T) = \sum_{y \sim o} \nu^h(\partial T_y)$.
If $x \ne o$, then by \eqref{eq:hidentity}, 
$$
\begin{aligned}
\sum_{y: y^- = x} \nu^h(\partial T_y) 
&= f(o,x) \sum_{y\,:\,y^- = x} f(x,y) \frac{h(y)-f(y,x)h(x)}{1-f(x,y)f(y,x)} \\
&= f(o,x)
\left(h(x) -f(x,x^-)\frac{h(x^-)-f(x^-,x)h(x)}{1-f(x,x^-)f(x^-,x)}\right)\\
&= f(o,x)\frac{h(x)-f(x,x^-)h(x^-)}{1-f(x,x^-)f(x^-,x)}
=\nu^h(\partial T_x).
\end{aligned}
$$
So $\nu^h$ is indeed a signed distribution on $\mathcal{F}_o\,.$
We verify that $\int_{\partial T} k(x,\xi)\,d\nu^h(\xi) = h(x)$.
For $x = o$ this is clear, so let $x \ne o$. With  
notation as in \eqref{eq:int'}, we simplify
$$
\Bigl( k(x,x_i) - k(x,x_{i-1})\Bigr)\,\nu^h(\partial T_{x_i})
= f(x,x_i)h(x_i)-f(x,x_{i-1}) h(x_{i-1})\,,
$$
whence we obtain
$$
\begin{aligned}
\int_{\partial T} K(x,\xi)\,d\nu^h(\xi) &= k(x,o)h(o) +
\sum_{i=1}^k \Bigl(f(x,x_i)h(x_j)-f(x,x_{i-1}) h(x_{i-1})\Bigr)\\ 
&= f(x,x)h(x) = h(x)\,,
\end{aligned}
$$
as stated.

Second, we need to verify that given $\nu$ and its Poisson transform 
$h$, we have $\nu=\nu^h$. This part of the proof is nothing but the identity 
\eqref{eq:recurs} in the proof of Proposition \ref{pro:transf}.
\end{proof}

\smallskip

\textbf{The natural Green weights}

\smallskip

We now ``reveal'' the origin of the axioms \eqref{eq:ne1} -- \eqref{eq:la} for the edge 
weights. Let $\res(P)$ be the resolvent set of the self-adjoint operator $P$ acting on
$\ell^2(T,\mm)$ according to \S \ref{sec:bdry}.C. For $\la \in \res(P)$, we write 
$\mathfrak{G}(\la) = (\la\cdot I - P)^{-1}$ for resolvent operator. Its matrix element
\begin{equation}\label{eq:Gxy}
G(x,y|\la) = \mathfrak{G}(\la)\uno_y(x)\,,\quad x,y \in T,
\end{equation}
is the \emph{Green function.} It is an analytic function of 
$\la \in \res(P) \supset \C \setminus [-\rho\,,\,\rho]$, and for $|\la| > \rho$,
$$
G(x,y|\la) = \sum_{n=0}^{\infty} p^{(n)}(x,y)\,\la^{-n-1}\,. 
$$
At $\la = \rho$, the latter series converge or diverge simultaneously for all $x, y$.
If they converge, i.e., $G(x,y|\rho) < \infty$ for all $x,y$, then $P$, resp.~the associated
random walk, is called \emph{$\rho$-transient,} and otherwise it is called
\emph{$\rho$-recurrent.} Set
\begin{equation}\label{eq:res*}
\res^*(P) = \bigl\{ \la \in \res(P) : G(x,x|\la) \ne 0 \; \text{for all} \; x \in T \bigr\}.
\end{equation}
For $\la \in \res^*(P)$, 
\begin{equation}\label{eq:Fxy}
F(x,y|\la) = G(x,y|\la)/G(y,y|\la)\,,\quad x,y \in T\,,
\end{equation}
is an analytic function of $\la$. For $|\la| \ge \rho$, 
\begin{equation}\label{eq:FU}
F(x,y|\la) = \sum_{n=0}^{\infty} f^{(n)}(x,y)/\la^n\,,
\end{equation}
where $f^{(n)}(x,y)$ is the probability that the random walk
starting at $x$ hits $y$ at time $n \ge 0$ for the first time. Also,
$$
U(x,x|\la) = \sum_{y \sim x} p(x,y) F(y,x|\la) = \sum_{n=1}^{\infty} u^{(n)}(x,x)/\la^n\,,
$$
where $u^{(n)}(x,x)$ is the probability that the random walk
starting at $x$ returns to $x$ at time $n \ge 1$ for the first time.
Now it is well known, and also explained in \cite{Ca}, \cite{W-Markov} as well as
in \cite{PiWo}, that the edge weights
$$
f(x,y) = F(x,y|\la)\,, \quad x,y \in T,\; x \sim y
$$
are $\la$-weights which fulfill the requirements \eqref{eq:ne1} -- \eqref{eq:la} 
for $\la \in \res^*(P)$, and for arbitrary $x,y \in T$,
\begin{equation}\label{eq:F}
\begin{aligned}
F(x,y|\la) &= F(x_0\,,x_1|\la) \cdots F(x_{k-1}\,,x_k|\la)\,,\quad \text{ where }\\ 
\pi(x,y) &= [x=x_0\,,x_1\,,\dots, x_k=y]. 
\end{aligned}
\end{equation}
With notation as in \S \ref{sec:intrep}, we also have $u(x,x) = U(x,x|\la)$ and 
$g(x,y) = G(x,y|\la)$. 
The associated kernel according to \eqref{eq:kern}, called the \emph{$\la$-Martin kernel}, is
\begin{equation}\label{eq:Mkernel}
k(x,w) = K(x,w|\la) = \frac{G(x,v|\la)}{G(o,v|\la)} \quad \text{for every vertex }\; 
v \in \pi(x \wedge w, w),
\end{equation}
where $x \in T$ and $w \in \wh T$. All this also works for $\la = \pm \rho$ in the
$\rho$-transient case. Thus, Theorem \ref{thm:general} yields the following, which we restate
here once again.

\begin{cor}\label{cor:martin}
For $\la \in \res^*(P)$, as well as for $\la = \pm \rho$ in the $\rho$-transient case,
every $\la$-harmonic function $h$ has an integral representation
$$
h(x) = \int_{\partial T} K(x,\xi|\la)\,d\nu(\xi)\,.
$$
The strong complex distribution $\nu = \nu^h$ on $\partial T$ is 
determined by $h$, 
$$
\nu^h(\partial T)= h(o) \AND 
\nu^h(\partial T_x) = F(o,x|\la)\,\frac{h(x)-F(x,x^-|\la)h(x^-)}{1-F(x^-,x|\la)F(x,x^-|\la)}\;\;\;
\text{for}\; x \ne o\,.
$$
\end{cor}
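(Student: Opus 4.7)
The plan is to deduce this corollary as a direct specialization of Theorem~\ref{thm:general} to the concrete choice of oriented edge weights $f(x,y) := F(x,y|\la)$. Once one knows that these Green weights satisfy the three axioms \eqref{eq:ne1}--\eqref{eq:la}, the integral representation follows with the kernel $k(\cdot,\cdot)$ of \eqref{eq:kern} equal to $K(\cdot,\cdot|\la)$ as in \eqref{eq:Mkernel}, and the explicit formula for $\nu^h$ is obtained by substituting $f = F(\cdot,\cdot|\la)$ into the formula of Theorem~\ref{thm:general}.

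To verify the axioms I would proceed in the following order. For \eqref{eq:nela} I would invoke the scalar identity $\la - U(x,x|\la) = 1/G(x,x|\la)$, which holds on $|\la|>\rho$ by conditioning on the first return time to $x$ and extends to $\res^*(P)$ by analytic continuation; since $G(x,x|\la)\ne 0$ on $\res^*(P)$ by the very definition of that set, this gives $U(x,x|\la)\ne\la$. For \eqref{eq:la}, I would use the first-passage decomposition of a walk from $x$ to a neighbour $y$, splitting according to whether the first step is $x\to y$ or $x\to v$ for some other neighbour $v\ne y$ of $x$ (which then forces a first return to $x$ before $y$ can be reached); summing over $v$ and collecting the coefficient of $F(x,y|\la)$ produces exactly the functional equation \eqref{eq:la}. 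For \eqref{eq:ne1}, I would observe that \eqref{eq:la} already forces $F(x,y|\la)\ne 0$, and combined with the scalar identity just mentioned it rearranges to $p(x,y)\bigl(1-F(x,y|\la)F(y,x|\la)\bigr) = F(x,y|\la)/G(x,x|\la)$, whose right-hand side is nonzero.

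The case $\la=\pm\rho$ with $P$ being $\rho$-transient is then handled by the same verification, since the probabilistic series defining $F(x,y|\rho)$ and $U(x,x|\rho)$ converge absolutely precisely by $\rho$-transience, and the first-passage identity carries over to the boundary of the disc by continuity. The one technical point that I expect to require real care is the absolute convergence condition in \eqref{eq:nela} at vertices of infinite degree, which is not automatic outside the probabilistic regime and must be established by viewing $U(x,x|\la) = \sum_v p(x,v)F(v,x|\la)$ as an analytic function on $\res^*(P)$; rather than reproving this, I would simply cite the arguments already worked out in \cite{PiWo} and \cite[Ch.~9]{W-Markov}.
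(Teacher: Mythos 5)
Your proposal is correct and takes essentially the same route as the paper: the paper also obtains Corollary \ref{cor:martin} by specializing Theorem \ref{thm:general} to the Green weights $f(x,y)=F(x,y|\la)$, simply citing \cite{Ca}, \cite{W-Markov}, \cite{PiWo} for the fact that these weights satisfy \eqref{eq:ne1}--\eqref{eq:la} on $\res^*(P)$ and at $\la=\pm\rho$ in the $\rho$-transient case. Your verification sketches --- the first-return identity $\la-U(x,x|\la)=1/G(x,x|\la)$ with $G(x,x|\la)\ne 0$ on $\res^*(P)$, the first-passage decomposition yielding \eqref{eq:la}, the rearrangement $p(x,y)\bigl(1-F(x,y|\la)F(y,x|\la)\bigr)=F(x,y|\la)/G(x,x|\la)$ giving \eqref{eq:ne1}, and deferring the absolute-convergence issue at infinite-degree vertices to \cite{PiWo} and \cite{W-Markov} --- are precisely the standard arguments behind those citations, so nothing essential is missing.
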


As already mentioned, this general result of \cite{PiWo} was preceded by various earlier 
ones, starting with the seminal paper \cite{Ca} (that deals with locally finite trees and  
positive $\la > \rho$,  and also $\la = \rho$ in the $\rho$-transient case), and another 
proof in \cite{PiWo-0}. In \cite{FiSt}, one finds the result for complex $\la$ in the locally 
finite case corresponding to nearest neighbour group invariant random walks on free groups 
(resp. closley related groups freely generated by involutions): the special case 
of the simple random walk in this environment goes back to \cite{MaZa}.
A first proof for the non-locally finite case and $\la=1$ (transient case) is in 
\cite[\S 9.D]{W-Markov}.  

\begin{rmk}\label{rmk:positive}
If $\la \ge \rho$, or if $\la=\rho$ in the $\rho$-transient case, it is
a well-known fact that for any positive $\la$-harmonic function $h$, one has
$$
F(x,y|\la)\, h(y) \le h(x)\quad \text{for all }\; x, y\,.
$$
(This holds for any irreducible Markov chain.) In particular, the distribution $\nu^h$ of 
Corollary \ref{cor:martin} is non-negative, whence it extends to a $\sigma$-additive measure
on $\partial T$,  and Corollary \ref{cor:martin} leads to the classical Poisson-Martin 
representation. 
Furthermore, in that case, the real $\la$-harmonic functions which are Poisson transforms
of \emph{$\sigma$-additive} signed Borel measures on $\partial T$ are precisely the differences
of non-negative $\la$-harmonic functions. For the complex-valued case, the situation is
analogous. \hfill $\square$
\end{rmk}

There are many analogies between the structure, group actions, harmonic analysis and
potential theory on trees (in particular, regular trees) and the Poincar\'e disk, that
is, the open unit disk with the hyperbolic metric. The discrete Laplacian $P-I$ arising
from a random walk on $T$ is an analogue of the hyperbolic Laplace-Beltrami operator on the
disk. See e.g. {\sc Boiko and Woess}~\cite{BoWo} for a mostly potential theoretic
``dictionary'' regarding the correspondences. In this sense, our representation theorem
\ref{thm:general} should be seen as a discrete analogue of a result of 
{\sc Helgason}~\cite{He} for a Poisson-type integral representation of \emph{all} harmonic
functions on rank 1 symmetric spaces, and in particular, the hyperbolic disk: see the
beautifully written exposition by {\sc Eymard}~\cite{Ey}. There, the integral representation
is with respect to \emph{analytic functionals} on the boundary (the unit circle), of which
our strong distributions are the analogues in the tree setting.

\section{Twin kernels for affine  and simple random walks}\label{sec:twin}

As we have seen above, the natural version of Theorem \ref{thm:general} 
is the one where the $\la$-weights are $f(x,y) = F(x,y|\la)$, where $\la \in \res^*(P)$,
resp.~$\la = \pm \rho$ in the $\rho$-transient case.

Now, there are cases where one has another 
choice for the collection of $\la$-weights $f(x,y)$ satisfying
\eqref{eq:ne1} -- \eqref{eq:la}, leading to another kernel which can also be used
to describe the $\la$-harmonic functions of $P$. The main aim of this section is to obtain a
better understanding of such twin kernels and the different integral representations
for a class of random walks which includes the simple random walk on a homogeneous tree.

We consider $T = \T_q\,$, the homogeneous tree with degree $q+1$, where $q \ge 1$. 
In case $q=1$, this is just the bi-infinite integer line $\Z$.

For any end $\xi$ of $T$, we define the associated \emph{horocycle index}
$$
\hor(x,\xi) = d(x,x \wedge \xi) - d(o,x \wedge \xi) \in \Z\,,
$$
(we recall that $\wedge$ stands for taking the confluent with respect to $o$).
In addition to the root vertex, we choose and fix a reference end $\varpi$
and write $\hor(x) = \hor(x, \varpi)\,$. 
The \emph{horocycles} are the resulting level sets:
$\Hor_k = \{ x \in T : \hor(x) = k \}\,$, $k \in \Z$. Thus, (following Cartier) 
one can imagine the tree as an infinite genealogical tree, where $\varpi$ is the 
mythical ancestor, and the horocycles are the successive generations. Each of them 
is infinite, and each $x \in \Hor_k$ has precisely one neighbour (parent) in $\Hor_{k-1}$
and $q$ neighbours (children) in $\Hor_{k+1}$ (see  Figure~1). 
The subgroup of $\Aut(\T_q)$ which preserves
this genealogical order, i.e., the group of automorphisms which fix $\varpi$, is called
the affine group $\Aff(\T_q)$ of $\T_q$. It was shown to be amenable by {\sc Nebbia}~\cite{Ne},
but non-unimodular for $q \ge 2$, see {\sc Trofimov}~\cite{Tr}. We note that the indexing of the
horocycles here is opposite to the one which is commonly used in the unit disk, resp.~hyperbolic
upper half plane. The reason lies in the opposite behaviour of absolute values and $q$-adic
norms.  Very general random walks on $\Aff(\T_q)$ were studied in detail by
{\sc Cartwright, Kaimanovich and Woess}~\cite{CaKaWo}. 
\begin{figure}[ht]
$$
\beginpicture 

\setcoordinatesystem units <.55mm,.9mm> 

\setplotarea x from -10 to 104, y from -12 to 80

\arrow <6pt> [.2,.67] from 2 2 to 80 80

\arrow <6pt> [.2,.67] from 94 2 to 99 -3

 \plot 32 32 62 2 /

 \plot 16 16 30 2 /

 \plot 48 16 34 2 /

 \plot 8 8 14 2 /

 \plot 24 8 18 2 /

 \plot 40 8 46 2 /

 \plot 56 8 50 2 /

 \plot 4 4 6 2 /

 \plot 12 4 10 2 /

 \plot 20 4 22 2 /

 \plot 28 4 26 2 /

 \plot 36 4 38 2 /

 \plot 44 4 42 2 /

 \plot 52 4 54 2 /

 \plot 60 4 58 2 /

 \plot 99 29 64 64 /

 \plot 66 2 96 32 /

 \plot 70 2 68 4 /

 \plot 74 2 76 4 /

 \plot 78 2 72 8 /

 \plot 82 2 88 8 /

 \plot 86 2 84 4 /

 \plot 90 2 92 4 /

 \plot 94 2 80 16 /


\setdots <3pt>
\putrule from -4.8 4 to 102 4
\putrule from -4.1 8 to 102 8
\putrule from -2 16 to 102 16
\putrule from -1.7 32 to 102 32
\putrule from -1.7 64 to 102 64

\put {$\vdots$} at 32 -3
\put {$\vdots$} at 64 -3

\put {$\dots$} [l] at 103 6
\put {$\dots$} [l] at 103 48

\put {$\Hor_{-3}$} [l] at -20 64
\put {$\Hor_{-2}$} [l] at -20 32
\put {$\Hor_{-1}$} [l] at -20 16
\put {$\Hor_0$} [l] at -20 8
\put {$\Hor_1$} [l] at -20 4
\put {$\vdots$} at -10 -3
\put {$\vdots$} [B] at -10 70
\put {$\varpi$} at 82 82
\put {$\xi$} at 100 -5.5

\put {$o$} at 7 10.5
\put {$x$} at 70.8 10.5
\put {$v = \varpi \!\wedge\! \xi$} [r] at 63 66.5
\put {$c = x \!\wedge\! \xi$} [r] at 79.6 18.6

\put {$\scriptstyle \bullet$} at 8 8
\put {$\scriptstyle \bullet$} at 72 8
\put {$\scriptstyle \bullet$} at 80 16
\put {$\scriptstyle \bullet$} at 64 64

\put {Figure~1} at 48 -14
\endpicture
$$
\end{figure}

Here we only consider nearest neighbour
walks which are invariant under that group. Their transition probabilities are parametrized
by an $\alpha \in (0\,,\,1)$ as follows:
\begin{equation}\label{eq:alpha}
\text{for }\; x \sim y\,,\quad p(x,y) = 
\begin{cases} \alpha/q\,,&\text{if }\; \hor(y) = \hor(x)+1\,,\\
              1- \alpha\,,&\text{if }\; \hor(y) = \hor(x)-1\,.
\end{cases}  
\end{equation}
The simple random walk arises when $\alpha = q/(q+1)$.
It is easy to see, and a consequence of the next computations, that the spectral radius
is
$$
\rho = \rho(P) = 2\sqrt{\al(1-\al}.
$$
\begin{rmk}\label{obs:ne1}
In the group invariant case, $G(\la) = G(x,x|\la)$ is independent of $x$.
(Do not confuse this with the resolvent operator $\mathfrak{G}(\la)$, of
which $G(\la) = \mathfrak{G}(\la)\uno_x(x)$ is the diagonal matrix element.) 
In the present example, we can use the argument at the end of \cite[Remark 2.8]{PiWo} to see that
 $G(\la) \ne 0$ for any $\la \in \res(P)$. Indeed, as stated there, if $G(\la)=0$ 
then some and thus every $x \in T$ would have a \emph{unique} neighbour $y$
such that $p(x,y)\,G(y,x|\la) = p(y,x)\,G(x,y|\la) = -1$.
But since $\Aff(\T_q)$ acts transitively on the edges (preserving orientation, hence the ``parent relation''),
this would hold for all pairs of neighbours, a contradiction. \hfill $\square$
\end{rmk}
We shall of course see this via explicit computation in a moment.
By group-invariance, there are only two types of functions $F(x,y|\la)$ for
neighbours $x,y$. We set 
$F_+(\la) = F(x,y|\la)$ when $\hor(y) = \hor(x)+1$, and 
$F_-(\la) = F(x,y|\la)$ when $\hor(y) = \hor(x)-1$. As we have mentioned in \S \ref{sec:intrep}.C,
these functions, as $\la$-weights on the edges, satisfy \eqref{eq:ne1} -- \eqref{eq:la}:
see \cite[Lemma 2.3]{PiWo}. A priori, this 
is true for $|\la| > \rho$, and for other $\la \in \spec(P)$, 
one uses analytic continuation. Now \eqref{eq:la}
yields the following equations.
\begin{align}
\la\, F_-(\la) &= (1-\alpha) + \alpha\, F_-(\la)^2\,,\quad\text{and}\label{eq:eq1}\\
\la\, F_+(\la) &= \frac{\alpha}{q} + (1-\alpha) F_+(\la)^2 + \frac{q-1}{q}\,\alpha\, F_-(\la)F_+(\la)\,.
\label{eq:eq2}
\end{align}
Throughout this paper we make the following habitual choice.

\begin{con}\label{con:vention}
Our usual choice for the analytic continuation of the 
square root is the one on the slit plane without the negative half-axis, that is
$\sqrt{r\,e^{i\theta}} = \sqrt{r} \, e^{i\theta/2}$ for $r > 0$ and $-\pi < \theta < \pi$. 
\end{con}

With this in mind, equation \eqref{eq:eq1} has the two solutions
\begin{equation}\label{eq:FtildeF}
F_-(\la) = \frac{\la}{2\alpha}\Bigl( 1 - \sqrt{1 - 4\alpha(1-\alpha)/\la^2}\Bigr),
\quad
\wt F_-(\la) = \frac{\la}{2\alpha}\Bigl( 1 + \sqrt{1 - 4\alpha(1-\alpha)/\la^2}\Bigr).
\end{equation}

The solution that gives rise to the function defined in \eqref{eq:FU}, and thus is associated with
the resolvent $G(x,y|\la)$, is given by 
the convergent series \eqref{eq:FU} in powers of $1/\la$. It must be analytic for 
$\la \in \C \setminus [-\rho\,,\,\rho]$ and decreasing for real 
$\la > \rho$, so it is the former in \eqref{eq:FtildeF}.
If we insert it into \eqref{eq:eq2} then we get 
once more two solutions,
\begin{equation}\label{eq:sol}
F_+(\la) = \frac{\alpha}{(1-\alpha)q} \, F_-(\la) \AND
\wh F_+(\la) = \frac{\alpha}{(1-\alpha)} \, \wt F_-(\la)
\end{equation}
Again, the solution corresponding to the resolvent is $F_+(\la)$.
On the other hand, if in 
\eqref{eq:eq2} we insert $\wt F_-(\la)$ instead of $F_-(\la)$, then we get the following two other solutions of that equation:
\begin{equation}\label{eq:soltilde}
\wt F_+(\la) = \frac{\alpha}{(1-\alpha)q} \, \wt F_-(\la) \AND
\wh{\wt F}_+(\la) = \frac{\alpha}{(1-\alpha)} \, F_-(\la)\,.
\end{equation}
A priori, we might consider to use any of the four pairs 
$( F_-\,, F_+)$, $(\wt F_-\,,\wt F_+)$,   $(F_-\,,\wh F_+)$ and $(\wt F_-\,,\wh{\wt F}_+)$
for defining weights $f(x,y)$ on the edges in a way which remains invariant under
$\Aff(\T_q)$. But $F_-(\la)\,  \wh F_+(\la) = \wt F_-(\la) \wh{\wt F}_+(\la) = -1$,
and this is not compatible with \eqref{eq:ne1}. 

Thus, we have the natural choice $(F_-\,,F_+)$ and the ``twin'' $(\wt F_-\,,\wt F_+)$.
The weights provided by $\bigl(F_-(\la),F_+(\la)\bigr)$ in the sense of Section \ref{sec:intrep}
are the Green weights, $f(x,y) = F_{\pm}(\la)$ for neighbours $x,y$ with 
$\hor(y) = \hor(x)\pm 1$. An easy consequence of \eqref{eq:ug} is
$$
G(x,x|\la) = G(\la) = \frac{2q/\la}{(q-1) + (q+1)\sqrt{1 - 4\alpha(1-\alpha)/\la^2}}\,.
$$
We remark that from this one can deduce by classical spectral methods that
$\spec(P) = [-\rho\,,\,\rho]$, where $\rho = 2\sqrt{\alpha(1-\alpha)}$. Namely, $G(x,x|\la)$ is
the \emph{Stieltjes transform} of the \emph{Plancherel} or \emph{spectral measure}, 
also called  \emph{KNS-measure} by {\sc Grigorchuk and \.Zuk}~\cite{GZ}. 
That measure is the diagonal element
of the resolution of the identity of the operator $P$; in the context of infinite graphs, see
e.g. {\sc Mohar and Woess}~\cite{MoWo}. Some more details will be considered
in \S \ref{sec:invariance}. The measure, and in this case,
its density with respect to Lebesgue measure, can be computed via the
inversion formula of Stieltjes--Perron; see {\sc Wall}~\cite{Wall}. The spectrum is the
support of that measure. 

We also observe that our random walk is $\rho$-transient precisely when $q \ge 2$.
We see that the Green weights
fulfill the requirements \eqref{eq:ne1} -- \eqref{eq:la} for any 
$\la \in \C \setminus [-\rho\,,\,\rho]$, as well as for $\la=\pm\rho$ when $q \ge 2$.

On the other hand, the only value of $\la$ for which \eqref{eq:ne1} does not hold, i.e. $\wt F_-(\la) \wt F_+(\la) = 1$, is
\begin{equation}\label{eq:la0}
\la_0 = \frac{q+1}{2\sqrt{q}}\,\rho = \frac{\rho}{\rho(\text{SRW})}\,,
\end{equation}
where $\rho(\text{SRW})$ is the spectral radius of the simple random walk on $\T_q\,$,
that is the random walk that arises for $\alpha  = q/(q+1)$. It is also easy to check that
$$
\wt U(\la) = \wt U(x,x|\la) := \sum_{y \sim x} p(x,y) \wt F(y,x|\la) = 
\frac{q+1}{2q}\la \Bigl( 1 + \sqrt{1 - 4\alpha(1-\alpha)/\la^2}\Bigr)
$$
satisfies $\wt U(\la) = \la$ precisely when $\la = \la_0\,$.
Thus, using $\bigl(\wt F_-(\la),\wt F_+(\la)\bigr)$, the weights 
$\tilde f(x,y) = \wt F_{\pm}(\la)$ for $x \sim y$ with 
$\hor(y) = \hor(x)\pm 1$ 
fulfill the requirements \eqref{eq:ne1} -- \eqref{eq:la} for any 
$\la \in \C \setminus (-\rho\,,\,\rho)$, with the exception of $\la_0\,$. 

According to  \eqref{eq:F}, resp~ \eqref{eq:f}, for $\la \in \C \setminus [-\rho\,,\,\rho]$
and arbitrary $x,y \in T$ we have the extensions
$$
F(x,y|\la) = F_-(\la)^{d(x,v)}F_+(\la)^{d(v, y)}
\AND
\wt F(x,y|\la) = \wt F_-(\la)^{d(x,v)}\wt F_+(\la)^{d(v, y)}\,,
$$
where $v$ is the unique point in $\pi(x,y)$ where $\hor(v)$ attains its minimum along that geodesic.
The associated Poisson kernels are
\begin{equation}\label{eq:KtildeK}
\begin{aligned}
K(x,\xi|\la) &=  F_-(\la)^{\hor(x,\xi)}\biggl(\frac{(1-\alpha)q}{\alpha}\biggr)^{\ell(x,\xi)} 
\;\quad\text{and}\\
\wt K(x,\xi|\la) &=  
\wt F_-(\la)^{\hor(x,\xi)}\biggl(\frac{(1-\alpha)q}{\alpha}\biggr)^{\ell(x,\xi)},\quad \text{where}\\
\ell(x,\xi) &= d\bigl(x \wedge \xi, \pi(o,\varpi)\bigr).
\end{aligned}
\end{equation}
This formula arises as follows: first, $\ell(x,\varpi)=0$ so that
$K(x,\varpi) = F_-(\la)^{\hor(x)}$. If $\xi \ne \varpi$ then let $v = \varpi \wedge \xi$
and $c = x \wedge \xi \in \pi(v,\varpi) \cup \pi(v,\xi)$; see Figure~1. 
If $c \in \pi(v,\varpi)$, then $F(x,c|\la) = F_-(\la)^{d(x,c)}$,
$F(o,c|\la) = F_-(\la)^{d(o,c)}$, and  $\ell(x,\xi) = 0$.
On the other hand, if $c \in \pi(v,\xi)$ then still $F(x,c|\la) = F_-(\la)^{d(x,c)}$,
but $F(o,c|\la) = F_-(\la)^{d(o,v)}F_+(\la)^{d(v,c)}$, and  $d(v,c)=\ell(x,\xi)$. Now the first identity in \eqref{eq:KtildeK} follows from
\eqref{eq:sol}.
The same arguments apply to $\wt K$ and $\wt F$. 
Note that, for $\la = \pm \rho$, we have $\wt K = K$.

\begin{rmk}\label{obs:Z} Consider the case when $q=1$ and the random walk is on $\T_2 \equiv \Z$. 
Its  non-zero transition probabilities are 
$$
p(x,x+1) = \alpha \AND p(x,x-1) = 1-\alpha\,, \quad x \in \Z\,.
$$
Then it is natural to write $\partial \T_2 = \partial\Z = \{\pm \infty\}$, with $\varpi = -\infty$.
Note that $\la_0=\rho$. When $\la \in \C \setminus [-\rho\,,\,\rho]$, we have 
$$
\wt K(x,+\infty|\la) = K(x,-\infty|\la) \AND \wt K(x,-\infty) = K(x,+\infty|\la)\,,
$$
the kernels at $+\infty$ and $-\infty$ are distinct, and every $\la$-eigenfunction
arises as a unique linear combination of those two kernels.

When $\la = \rho$, the function $K(x, +\infty|\rho)= K(x, -\infty|\rho)$ is the unique
positive $\rho$-harmonic function with value $1$ at the origin. 
\hfill $\square$
\end{rmk}

This settles the special case $q=1$. We are more interested in $q \ge 2$, where we get
the following.

\begin{cor}\label{cor:rep} For $q \ge 2$,
let $\la \in \C \setminus (-\rho\,,\,\rho)$, and let $h$ be a $\la$-harmonic function.
Then there is a unique strong distribution $\nu^h$ on $\partial T$
such that
$$
h(x) = \int_{\partial T} K(x,\xi|\la)\, d\nu^h(\xi)\,.
$$
If in addition $\la  \ne \la_0$ then there also is a unique strong distribution $\tilde \nu^h$
on $\partial T$ such that
$$
h(x) = \int_{\partial T} \wt K(x,\xi|\la)\, d\tilde\nu^h(\xi)\,.
$$
\end{cor}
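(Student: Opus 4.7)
The plan is to apply Theorem \ref{thm:general} twice, once for each of the two systems of $\la$-weights on the oriented edges of $T = \T_q$ that have been assembled in the preceding discussion.

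For the first representation, take the Green weights $f(x,y) = F_{\pm}(\la)$, with sign determined by whether $\hor(y) = \hor(x)\pm 1$. The paragraph following \eqref{eq:soltilde} has already recorded that this choice satisfies the three axioms \eqref{eq:ne1}--\eqref{eq:la} for every $\la \in \C \setminus [-\rho,\rho]$, and that the hypothesis $q\ge 2$ provides $\rho$-transience, which extends validity to the boundary values $\la = \pm\rho$. Consequently the weights are admissible on all of $\C \setminus (-\rho,\rho)$, and Theorem \ref{thm:general} produces both the asserted integral representation against $K(\cdot,\xi|\la)$ and the uniqueness of $\nu^h$ through the explicit formula for $\nu^h(\partial T_x)$ given there.

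For the second representation, use the twin weights $\tilde f(x,y) = \wt F_{\pm}(\la)$; all three axioms must be checked afresh. Axiom \eqref{eq:la} is nothing but the pair of quadratic relations \eqref{eq:eq1}--\eqref{eq:eq2}, which $\wt F_\pm$ solves by construction. Axiom \eqref{eq:ne1} reads $\wt F_-(\la)\wt F_+(\la) \ne 1$, and the computation leading to \eqref{eq:la0} identifies $\la = \la_0$ as the unique point of failure. Axiom \eqref{eq:nela} becomes $\wt U(\la)\ne \la$, and the subsequent computation of $\wt U(\la)$ shows that this again fails at, and only at, $\la = \la_0$. Excluding this single point from $\C \setminus (-\rho,\rho)$, Theorem \ref{thm:general} applies verbatim to $\tilde f$ and yields the unique strong distribution $\tilde\nu^h$ satisfying $h(x) = \int_{\partial T} \wt K(x,\xi|\la)\,d\tilde\nu^h(\xi)$.

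I expect no serious obstacle: the nontrivial work, namely verifying the three axioms for each of the two candidate weight systems (including the boundary values $\la=\pm\rho$ under $\rho$-transience, needed only for the first representation), has been carried out in the preliminary calculations. It is worth noting in passing that at $\la = \pm\rho$ the square root in \eqref{eq:FtildeF} vanishes, so $\wt F_\pm(\la) = F_\pm(\la)$ and hence $\wt K = K$; the second representation then coincides tautologically with the first, consistent with the fact that $\la_0 = \bigl((q+1)/(2\sqrt{q})\bigr)\rho \ne \pm\rho$ for $q \ge 2$ is the only point that the statement genuinely excludes.
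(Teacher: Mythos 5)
Your proposal is correct and matches the paper's own route exactly: the corollary is stated there without a separate proof precisely because, as you argue, it is an immediate application of Theorem \ref{thm:general} to the two weight systems, with the axiom checks (Green weights valid on $\C \setminus [-\rho\,,\,\rho]$ plus $\la = \pm\rho$ by $\rho$-transience for $q \ge 2$; twin weights valid on $\C \setminus (-\rho\,,\,\rho)$ except at $\la_0$, where \eqref{eq:ne1} and \eqref{eq:nela} both fail) already carried out in the text preceding the statement. Your closing remark that $\wt K = K$ at $\la = \pm\rho$ also agrees with the paper's observation immediately after the corollary.
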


Of course, when $\la = \pm \rho$, we have $\wt K = K$ and $\tilde \nu^h = \nu^h$,
but otherwise we shall see that the kernels and the representing distributions are distinct.

To our knowledge, this twin representation of $\la$-harmonic functions was first observed
and used for the simple random walk in the context of the representation theory of 
free groups by {\sc Mantero and Zappa}~\cite{MaZa}. 

If $\la \ge \rho$, then it is well-known that the functions $x \mapsto K(x,\xi|\la)$,
$\xi \in \partial T$, are the \emph{minimal} $\la$-harmonic functions, that is, the extremal 
elements of the convex set 
\begin{equation}\label{eq:base}
\mathcal{H}_o(\la) = \{ h: T \to (0\,,\,\infty) \mid h(o)=1\,,\; Ph = \la\cdot h \}\,.
\end{equation}
(When $T$ is locally finite, this set is compact in the topology of pointwise convergence.)
The index $o$ stands for normalization at the reference point $o$.

\begin{thm}\label{thm:twin} Assume that $q \ge 2$.
For $\la \in \C \setminus [-\rho\,,\,\rho]$,  $\la \ne \la_0\,$,
and for $\xi \in \partial T$, let $\nu^{\xi}$ and $\tilde \nu^{\xi}$
be the strong distributions on $\partial T$ in the sense  of Corollary \ref{cor:rep} such that
$$
\wt K(x,\xi|\la) = \int_{\partial T} K(x,\cdot|\la) \,d\nu^{\xi}
\AND K(x,\xi|\la) = \int_{\partial T} \wt K(x,\cdot|\la) \,d\tilde\nu^{\xi}.
$$ 
Then $\nu^{\xi}$ extends to a complex ($\sigma$-additive) Borel measure on $\partial T$,
while this does not hold for $\tilde \nu^{\xi}\,$.

If, in particular, $\la > \rho$ is real, then the Borel 
probability measure $\nu^{\xi}$ is supported 
by all of $\partial T$, so that $\wt K(\cdot,\xi|\la)$ is not minimal in  $\mathcal{H}_o(\la)$.
\end{thm}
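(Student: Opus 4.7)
The plan is to treat the distinguished regime $\la > \rho$ real first, then extend to general $\la$, then handle the non-extendability of $\tilde\nu^\xi$ separately.

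For $\la > \rho$ real, both $K(\cdot,\xi|\la)$ and $\wt K(\cdot,\xi|\la)$ are strictly positive and equal $1$ at $o$, so both lie in the Choquet simplex $\mathcal{H}_o(\la)$ of \eqref{eq:base}. By the classical minimality result cited just before the theorem, the extremal elements of $\mathcal{H}_o(\la)$ are exactly the Martin kernels $\{K(\cdot,\eta|\la):\eta\in\partial T\}$, so $\wt K(\cdot,\xi|\la)$ admits a unique Choquet representation against a Borel probability measure $\mu^\xi$. Uniqueness of the strong-distribution representation in Corollary \ref{cor:rep} identifies $\mu^\xi$ with $\nu^\xi$, yielding extendability and the probability-measure statement in this regime. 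For the full support I would compute $\nu^\xi(\partial T_x)$ from the formula of Corollary \ref{cor:rep} applied with $h = \wt K(\cdot,\xi|\la)$, splitting into the cases $\xi \in \partial T_x$ and $\xi \notin \partial T_x$ and invoking the identities $F_-\wt F_- = (1-\al)/\al$ (Vieta for \eqref{eq:eq1}) and $F_+\wt F_- = F_-\wt F_+ = 1/q$ (from \eqref{eq:sol} and \eqref{eq:soltilde}). These collapse the two cases to strictly positive expressions involving the factors $(q-1)/q$ and $\wt F(x,x^-|\la) - F(x,x^-|\la)$, so $\nu^\xi(\partial T_x) > 0$ for every $x$, giving full support and hence non-minimality of $\wt K(\cdot,\xi|\la)$.

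For general $\la \in \C \setminus [-\rho,\rho]$ with $\la \ne \la_0$, the same formula defines $\nu^\xi(\partial T_x)$ as an analytic function of $\la$, and the criterion \eqref{eq:M} reduces, by the triangle inequality applied to the strong-distribution identity under refinement of antichains, to a uniform bound on $\sum_{|x|=N}|\nu^\xi(\partial T_x)|$ in $N$. Organizing vertices at level $N$ according to their confluent $c = x \wedge \xi \in \pi(o,\xi)$ and using that $|F(c,x^-|\la)\,\wt F(x^-,c|\la)|$ equals $q^{-d(c,x^-)}$ identically (from $|F_\pm\wt F_\mp| = 1/q$), the branching count $q^{N-d(o,c)}$ is exactly cancelled, leaving a geometric series in $|F_-/\wt F_-|^{d(o,c)}$. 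Since $|F_-/\wt F_-| < 1$ strictly off the spectrum ($F_-$ is the branch of \eqref{eq:eq1} that vanishes at infinity while $\wt F_-$ does not), this series converges uniformly in $N$, proving the extendability of $\nu^\xi$.

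For the non-extendability of $\tilde\nu^\xi$ I would exhibit a divergent antichain. Taking $\xi = \varpi$, let $y_N$ ($N \ge 2$) be any forward neighbour of the vertex $z_{N-1} \in \pi(o,\varpi)$ at horocycle level $-(N-1)$ that itself lies off $\pi(o,\varpi)$; the arcs $\partial T_{y_N}$ are pairwise disjoint. Applying Corollary \ref{cor:rep} with edge weights $\wt F$ and $h(x) = K(x,\varpi|\la) = F_-(\la)^{\hor(x)}$, and using $\wt F_+ F_- = 1/q$, gives $\tilde\nu^\varpi(\partial T_{y_N})$ proportional to $(\wt F_-/F_-)^{N-1}$ with a $\la$-dependent nonzero constant (the assumption $\la \ne \la_0$ ensures the denominator $1 - \wt F_-\wt F_+$ is nonzero). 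Since $|\wt F_-/F_-| > 1$ on $\C \setminus [-\rho,\rho]$, the sum $\sum_N|\tilde\nu^\varpi(\partial T_{y_N})|$ diverges, so $\tilde\nu^\varpi$ does not extend. For $\xi \ne \varpi$, an antichain of vertices drifting toward $\varpi$ off $\pi(o,\xi)$ past some depth works by the same mechanism.

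The principal obstacle is the uniform total-variation bound for $\nu^\xi$ at complex $\la$, where positivity is unavailable: one has to combine the combinatorial vertex counts at each depth with the analytic sizes of monomials in $F_\pm,\wt F_\pm$, and it is precisely the structural identities $F_\pm\wt F_\mp = 1/q$ together with $F_+/\wt F_+ = F_-/\wt F_-$ that collapse every such product to a single geometric-series parameter $|F_-/\wt F_-| < 1$, making the bound go through.
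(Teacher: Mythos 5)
Your overall route coincides with the paper's own proof in its essentials: the structural identities $F_{\pm}(\la)\wt F_{\mp}(\la) = 1/q$ (the paper's \eqref{eq:1overq}) and $F_+/\wt F_+ = F_-/\wt F_-$, the key inequality $|F_-(\la)/\wt F_-(\la)|<1$, the two-case evaluation of $\nu^{\xi}(\partial T_x)$ according to whether $x$ lies on $\pi(o,\xi)$, the grouping of a disjoint family of arcs by the confluent $v = x\wedge\xi$, and the verification (resp.\ violation) of the bounded-variation criterion \eqref{eq:M} are exactly the paper's ingredients. Your reduction of \eqref{eq:M} to a uniform bound on the level sums $\sum_{|x|=N}|\nu^{\xi}(\partial T_x)|$ is a legitimate variant: since $\T_q$ is locally finite, any finite disjoint family of arcs refines via \eqref{eq:nu} and the triangle inequality into the level-$N$ antichain with $N = \max_n |x_n|$; the paper instead treats arbitrary disjoint families directly, dominating $\sum_{x_n \in A_k} q^{-d(x_n,v_k)}$ by the equidistribution $\ol\nu$ --- the two computations are equivalent. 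The Choquet detour for real $\la > \rho$ is correct but redundant, since your complex-$\la$ argument covers that case. One soft spot: your justification of $|F_-/\wt F_-|<1$ (``$F_-$ vanishes at infinity while $\wt F_-$ does not'') only yields the inequality for large $|\la|$; to get it on all of $\C\setminus[-\rho\,,\,\rho]$, as in \eqref{eq:ineq}, you need that under Convention \ref{con:vention} the principal root $w=\sqrt{1-\rho^2/\la^2}$ satisfies $\Re(w)>0$ off the spectrum, whence $|1-w|<|1+w|$.

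The one genuine error is in the final step, the non-extendability of $\tilde\nu^{\xi}$ for $\xi\ne\varpi$. Exchanging $F$ and $\wt F$ in \eqref{eq:case2} gives, for $x\notin\pi(o,\xi)$ and $v=x\wedge\xi$, the value $\tilde\nu^{\xi}(\partial T_x) = \wt C(\la)\,\bigl(\wt F_-(\la)/F_-(\la)\bigr)^{d(o,v)}\, q^{-d(x,v)}$; because $F_+/\wt F_+ = F_-/\wt F_-$, the reference end $\varpi$ is invisible in this formula, so the only way to produce large terms is to let the confluents $v = x_n\wedge\xi$ march out along $\pi(o,\xi)$ while $d(x_n,v)$ stays bounded. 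Your proposed antichain of vertices ``drifting toward $\varpi$ off $\pi(o,\xi)$'' does the opposite: beyond the branch point $b$ where $\pi(o,\varpi)$ and $\pi(o,\xi)$ separate, all such vertices have $x_n\wedge\xi = b$ fixed and $d(x_n,b)\to\infty$, so $|\tilde\nu^{\xi}(\partial T_{x_n})| = |\wt C(\la)|\,\bigl|\wt F_-/F_-\bigr|^{d(o,b)} q^{-d(x_n,b)} \to 0$ geometrically, and this family witnesses nothing. The fix is immediate and is precisely your $\xi=\varpi$ construction repeated verbatim: hang the vertices off $\pi(o,\xi)=[o=v_0\,,v_1\,,\dots]$ itself (one off-geodesic neighbour of each $v_k$, or all of them, as the paper does), producing terms of modulus comparable to $|\wt F_-/F_-|^k$, which blow up by \eqref{eq:ineq}; combined with $\wt C(\la)\ne 0$ for $\la\ne\la_0$, which you did check, this kills \eqref{eq:M}.
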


\begin{proof} We start with an inequality that will be needed below:
\begin{equation}\label{eq:ineq}
\biggl|\frac{F_-(\la)}{\wt F_-(\la)}\biggr| 
= \biggl|\frac{1- \sqrt{1-\rho^2/\la^2}}{\rho/\la}\biggr|^2 < 1
\quad \text{for all }\; \la \in \C \setminus [-\rho\,,\,\rho]\,. 
\end{equation}
Recalling Convention \ref{con:vention}, we obtain \eqref{eq:ineq} by a few elementary computations.

\smallskip

Now let $x \in T \setminus \{o\}$. Noting that $\hor(x^-) = \hor(x)\pm 1$, 
we can use the first ones of the respective
identities \eqref{eq:sol} and \eqref{eq:soltilde} plus \eqref{eq:eq1} to compute
\begin{equation}\label{eq:1overq}
\begin{aligned}
F(x,x^-|\la)\, &\wt F(x^-,x|\la) = F(x^-,x|\la)\, \wt F(x,x^-|\la) \\
&= F_-(\la) \wt F_+(\la) = F_+(\la) \wt F_-(\la) 
= F_-(\la)\,\wt F_-(\la) \,\frac{\alpha}{(1-\alpha)q} = \frac{1}{q}\,,
\end{aligned}
\end{equation}
because either $F(x,x^-|\la) = F_-(\la)$ and $\wt F(x^-,x|\la) = \wt F_+(\la)$,
or $F(x,x^-|\la) = F_+(\la)$ and $\wt F(x^-,x|\la) = \wt F_-(\la)$. In particular,
\begin{equation}\label{eq:1q}
 |F_+(\la)F_-(\la)| =  \left| \frac{F_-(\la)}{q \,\wt F_-(\la)}\right| < \frac{1}{q}\,.
\end{equation}

By Theorem \ref{thm:general}, 
$$
\nu^{\xi}(\partial T_x) = F(o,x|\la)\,
\frac{\wt K(x,\xi|\la) -F(x,x^-|\la)\wt K(x^-,\xi|\la)}{1-F_+(\la)F_-(\la)}
$$

\noindent\emph{\underline{Case 1:}} $x \in \pi(o,\xi)$. 

\smallskip

Then $\wt K(x,\xi|\la) = 1/\wt F(o,x|\la)$ and  
$\wt K(x^-,\xi|\la) = \wt F(x^-,x|\la)/\wt F(o,x|\la)$, and
\eqref{eq:1overq} yields
\begin{equation}\label{eq:case1}
\nu^{\xi}(\partial T_x) = \dfrac{1-1/q}{1-F_+(\la)F_-(\la)} \, 
\dfrac{F(o,x|\la)}{\wt F(o,x|\la)}\, 
 = \dfrac{1-1/q}{1-F_+(\la)F_-(\la)}\,
\biggl(\dfrac{F_-(\la)}{\wt F_-(\la)}\biggr)^{d(o,x)}\,.
\end{equation}
We note immediately that this is strictly positive when $\la > \rho$, because
in view of \eqref{eq:FU}, combined with \eqref{eq:FtildeF} and \eqref{eq:sol}, 
we then have $F_+(\la)F_-(\la) \le F_+(\rho)F_-(\rho) 
= \bigl(\frac{\rho}{2\al}\bigr)^2 \frac{\al}{(1-\al)q} = \frac{1}{q} < 1$.
\\[6pt]
\emph{\underline{Case 2:}} $x \notin \pi(o,\xi)$. 

\smallskip

Let $v = x \wedge \xi = x^- \wedge \xi$.  Then 
$\wt K(x,\xi|\la) = \wt F(x,v|\la)/\wt F(o,v|\la)$ and  
$\wt K(x^-,\xi|\la) = \wt K(x,\xi|\la)/\wt F(x,x^-|\la)$.
Now, \eqref{eq:1overq} yields that $F(v,x|\la) \wt F(x,v|\la) = q^{-d(x,v)}$,
because it is the product of $d(x,v)$ terms of the form 
$F(x_i^-\,,x_i|\la) \wt F(x_i\,,x_i^-|\la)$. Therefore 
\begin{equation}\label{eq:case2}
\begin{aligned}
\nu^{\xi}(\partial T_x) &= 
\overbrace{\frac{1-F_-(\la)/\wt F_-(\la)}{1-F_+(\la)F_-(\la)}}^{\displaystyle =:C(\la)}\,
F(o,x|\la) \,\frac{\wt F(x,v|\la)}{\wt F(o,v|\la)}\\
&= C(\la)\,\frac{F(o,v|\la)}{\wt F(o,v|\la)}\,F(v,x|\la) \wt F(x,v|\la)
= C(\la)\, \biggl(\frac{F_-(\la)}{\wt F_-(\la)}\biggr)^{d(o,v)}\biggl(\frac{1}{q}\biggr)^{d(x,v)}\,.
\end{aligned}
\end{equation}
Again, this is strictly positive when $\la > \rho$, and we obtain that
in this case the Borel probability measure $\nu^{\xi}$ is supported by all of
$\partial T$.

\smallskip

We now prove that for any $\la \in \C \setminus (-\rho\,,\rho)$, the distribution 
$\nu^{\xi}$ extends to a $\sigma$-additive Borel measure on $\partial T$.
Let $(x_n)_{n \ge 0}$ be a sequence of vertices such that the arcs $\partial T_{x_n}$ are
pairwise disjoint. 

Write $\pi(o,\xi)=[o=v_0\,,v_1\,,\dots]$. There can be at most
one $x_n$ on that geodesic ray. 
In that case, suppose it is $x_0\,$, that is, $x_0 = v_k$ for some $k \ge 0$.
By  \eqref{eq:ineq}, \eqref{eq:1q} and \eqref{eq:case1},
$$
|\nu^{\xi}(\partial T_{x_0})| <  \biggl|\dfrac{1-1/q}{1-F_+(\la)F_-(\la)}\biggr|< 1\,.
$$
Next, let $A_k = \{ n \ge 1: x_n \wedge \xi = v_k \}$. We claim that, using 
\eqref{eq:case2}, one has
$$
\sum_{n\,:\, x_n \in A_k} \bigl|\nu^{\xi}(\partial T_{x_n})\bigr| \le  
|C(\la)|\cdot \bigl|F_-(\la)/\wt F_-(\la)\bigr|^k\,.
$$
Indeed, consider the equidistribution $\ol\nu$ on $\partial T$, that is, 
$\ol\nu(\partial T_x) = 1\big/\bigl((q+1)q^{d(o,x)-1}\bigr)$ for $x \ne 0$. 
It extends to a Borel probability measure on $\partial T$, and for $k \ge 1$, 
$$
\sum_{n\,:\, x_n \in A_k} \!\!q^{-d(x_n,v_k)} 
= (q+1)q^{k-1} \!\!\sum_{n\,:\, x_n \in A_k}\!\! \ol\nu(\partial T_{x_n}) 
\le (q+1)q^{k-1} \,\,\ol\nu(\partial T_{v_k} \setminus \partial T_{v_{k+1}}) = \frac{q-1}{q}\,.
$$
For $k = 0$, the analogous computation yields the upper bound $1$.
By \eqref{eq:ineq}, 
$$
\sum_{n=0}^{\infty}\bigl|\nu^{\xi}(\partial T_{x_n})\bigr| \le 
\biggl|\dfrac{1-1/q}{1-F_+(\la)F_-(\la)}\biggr| \,+\,
 \sum_{k=0}^{\infty} \sum_{n\,:\, x_n \in A_k}\bigl|\nu^{\xi}(\partial T_{x_n})\bigr|
\le 1 + \frac{|C(\la)|}{1-|F_-(\la)/\wt F_-(\la)|}\,.
$$
So condition \eqref{eq:M} is satisfied, and $\nu^{\xi}$ has a $\sigma$-additive extension,
as stated.

\smallskip

To obtain the analogous formulas to \eqref{eq:case1} and \eqref{eq:case2} for
$\tilde\nu^{\xi}$, we just have to exchange $F$ and $\wt F$ in each occurence.
We write $\wt C(\la)$ for the resulting constant in the analogue of \eqref{eq:case2}.
In this case, let the sequence $(x_n)$ consist of all the neighbours of the $v_k\,$, $k \ge 1$ 
which do not lie on $\pi(o,v)$. Thus, the set $A_k$ defined above consists of the neighbours of
$v_k$, and by the same computation  we obtain
$$
\sum_{n\,:\, x_n \in A_k} \bigl|\tilde\nu^{\xi}(\partial T_{x_n})\bigr| =  
|\wt C(\la)|\cdot \bigl|\wt F_-(\la)/ F_-(\la)\bigr|^k\,.
$$
The sum over all $k$ diverges by \eqref{eq:ineq}, so that $\wt \nu^{\xi}$ does not 
satisfy the bounded variation condition \eqref{eq:M}.
\end{proof}

\section{General transitive group actions}\label{sec:invariance}

After the detailed study of multiple integral representations in
\S \ref{sec:twin}, we now turn to general transitive group actions in the
place of $\Aff(\T_q)$. Once more, we take up material from our ``companion'' paper
\cite[\S 4]{PiWo}: we assume that the transition probabilities are invariant
under a general group $\Gamma$ of automorphisms of the tree which acts transitively 
on the vertex set. That is, 
$$
p(\gamma x,\gamma y) = p(x,y) \quad\text{for all }\; x,y \in T \; \text{ and }\;\gamma \in \Gamma\,.
$$
Let $I = \Gamma \backslash E(T)$ be the set of orbits of $\Gamma$ on the set of oriented
edges of $T$. If $j \in I$ is the orbit \emph{(type)} of $(x,y) \in E(T)$ then we write 
$p_j=p(x,y)$ and $-j$ for the orbit of $(y,x)$. Then $-j$ is independent of the representative 
$(x,y)$, and $-(-j) = j$. In particular, $-j = j$ if and only if there is 
$\gamma \in \Gamma$ for which $\gamma x =y$ and $\gamma y = x$. 
For each $j \in I$ and fixed $x \in T$, we set $d_j = |\{ y \sim x : (x,y) \in I \}|$.
This is finite because $d_j \le 1/p_j\,$, and independent of $x$ by transitivity of $\Gamma$. 
For example, when $\Gamma = \Aut(\T_q)$ then $I = \{1\}$ with $d_1 = q+1$, while when 
$\Gamma = \Aff(\T_q)$ then $I = \{ \pm 1 \}$ with $d_{-1} = 1$ and $d_{1} = q$.
Thus, $\sum_{j \in I} d_jp_j =1$, and  $\dg(x) = \sum_{j \in I} d_j\,$.

As clarified in \cite[Remark 4.4, second half]{PiWo}, one can start with a finite or countable set $I$ 
with an involution  $j \mapsto -j$ and a collection $(d_j)_{j \in I}$ of natural numbers.
Then for  the regular tree $T$ with degree $\sum_j d_j \le \infty\,$, there is    
a group $\Gamma \le \Aut(T)$ which acts transitively and such that $I$ is is in one to one correspondence with its  set
of orbits and the associated cardinalities are $d_j$.

For example, when $d_j=1$ 
for all $j$, then we can choose $\Gamma$ as the discrete group
\begin{equation}\label{eq:discgp}
\Gamma = \langle a_j, j \in I \mid 
a_j^{-1} = a_{-j} \;\text{for all}\; j \in I \rangle\,.
\end{equation}
Then, when $j \ne -j$, we can choose just one out of $a_j$ and
$a_{-j}$ as a free generator. Instead, when $j = -j$, then $a_j$
is a generator whose square is the group identity. In this example, $\Gamma$ acts transitiviely
with trivial stabilizers, and the fact that this provides all possible groups which act 
in this way on a countable tree is a well-known basic part of Bass--Serre theory (see {\sc Serre}~\cite{Se}). In all other cases, $\Gamma$ will have non-discrete closure in
$\Aut(T)$.

In the general situation of a transitive group action which leaves the transition probabilities
invariant, it is shown in \cite[Thm.\;4.2]{PiWo} that $\res(P) \setminus \res^*(P) \subset \{0\}$.
That is, $G(\la) \ne 0$ for all $\la \in \res(P) \setminus \{ 0\}$, where $G(\la) = G(x,x|\la)$,
which is independent of $x$ by transitivity. We remark that it may happen that $0$ 
is part of the resolvent set of $P$ \cite{FiSt}.

Here, we shall always assume that the vertex degree is $\ge 3$, so that our random walk
has to be $\rho$-transient by a result of {\sc Guivarc'h}~\cite{Gu}. When $I$ is infinite
we make the additional assumption that 
\begin{equation}\label{eq:ass}
\sum_{j \in I} d_j\, p_{-j} < \infty\,.
\end{equation}
Note that this is the sum over all neighbours of any vertex $x$ of the incoming probabilities 
$p(y,x)$. The assumption is satisfied, for instance, if the quotients $p_{-j}/p_j$ are bounded. 

If $(x,y)$ is an edge of type $j$, then $g(x,y) = G(x,y|\la) = G_j(\la)$ 
depends only on $j$. By reversibility, we have
$$
p_j\, G_{-j}(\la) = p_{-j}\, G_j(\la)\,,
$$
and the second identity of Lemma \ref{lem:recover}  
becomes 
\begin{equation}\label{eq:GG}
p_{-j}\, G_j(\la)^2 + G_j(\la) - p_j\, G(\la)^2 = 0\,.  
\end{equation}
When $\la > \rho$ is real, among the two solutions of this equation the meaningful one is
\begin{equation}\label{eq:Gj}
G_j(\la) = \frac{1}{2p_{-j}}\Bigl(\sqrt{1 + 4p_jp_{-j} \,G(\la)^2} - 1\Bigr)\,,
\end{equation} 
because the functions $G(\la)$ and $G_j(\la)$ are decreasing in this range of $\la$.
In other regions of the plane, there may be a minus sign in front of the root.

\begin{pro}\label{pro:eqext} Let $\kappa = \max \{ 2\sqrt{p_j p_{-j}} : j \in I \}$.
Then the identity \eqref{eq:Gj} holds for all $\la$ 
in the set
$$
\mathcal{U} = \{ \la \in \C : |\la| > \rho \} \setminus \{ \pm i\,t : \rho < t \le \kappa \}\,.
$$
(When $\kappa \le \rho$ the last part is empty.)  
\end{pro}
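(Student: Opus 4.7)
Squaring the claimed identity and using \eqref{eq:GG} gives the tautology
$$
\bigl(2p_{-j}\,G_j(\la)+1\bigr)^2 = 1 + 4p_jp_{-j}\,G(\la)^2,
$$
so the problem reduces to sign tracking: one must show that $2p_{-j}G_j(\la)+1$ equals the principal branch (Convention~\ref{con:vention}) of $\sqrt{1+4p_jp_{-j}G(\la)^2}$ on all of $\mathcal{U}$. Both sides are analytic on the open, connected set $\mathcal{U}$ as long as $1+4p_jp_{-j}G(\la)^2$ avoids the cut $(-\infty,0]$, and they coincide on the ray $\la>\rho$ by the classical derivation of \eqref{eq:Gj}. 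An analytic-continuation argument therefore reduces the proposition to the claim that $G(\la)$ is never purely imaginary with $|G(\la)|\ge 1/(2\sqrt{p_jp_{-j}})$ at any $\la\in\mathcal{U}$.

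To locate purely imaginary values of $G(\la)$, I would use the spectral representation $G(\la)=\int_{\spec(P)}(\la-s)^{-1}d\mu(s)$, where $\mu$ is a Borel probability measure on $[-\rho,\rho]$, symmetric about the origin by bipartiteness of $T$. For $\la=u+iv$, folding the defining integral under $s\mapsto -s$ yields
$$
\Re G(\la) = u\int\frac{|\la|^2-s^2}{|\la^2-s^2|^2}\,d\mu(s).
$$
Since $|s|\le\rho<|\la|$, the integrand is strictly positive, so $\Re G(\la)=0$ forces $u=0$. Thus, for $|\la|>\rho$, the value $G(\la)$ is purely imaginary if and only if $\la$ itself is purely imaginary.

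On the imaginary ray $\la=it$, $t>\rho$, one has $G(it)=-it\cdot a(t)$ with $a(t)=\int(t^2+s^2)^{-1}d\mu(s)$, so $|G(it)|=t\cdot a(t)$. Differentiation shows this is strictly decreasing in $t$ on $(\rho,\infty)$, because
$$
\frac{d}{dt}\bigl(t\cdot a(t)\bigr) = \int\frac{s^2-t^2}{(t^2+s^2)^2}\,d\mu(s) < 0 \quad\text{for}\quad t>\rho\ge|s|.
$$
Moreover, the trivial bound $\kappa^2/(\kappa^2+s^2)\le 1$ gives
$$
\kappa\cdot|G(i\kappa)| = \int\frac{\kappa^2}{\kappa^2+s^2}\,d\mu(s)\le 1,
$$
so $|G(it)|<1/\kappa\le 1/(2\sqrt{p_jp_{-j}})$ for every $t>\kappa$. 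Combining these steps: any $\la\in\mathcal{U}$ with $1+4p_jp_{-j}G(\la)^2\in(-\infty,0]$ would have to lie on the imaginary axis with $\rho<|\Im\la|\le\kappa$, precisely the set excluded from $\mathcal{U}$---a contradiction. The only non-routine ingredient is the real-part decomposition of $G(\la)$ above, which critically exploits the symmetry of $\mu$; the rest is monotonicity together with a trivial estimate.
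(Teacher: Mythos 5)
Your proof is correct and takes essentially the same route as the paper's: analytic continuation from the real ray $\la>\rho$ after verifying that $1+4p_jp_{-j}\,G(\la)^2$ avoids the branch cut, with the symmetric Plancherel measure yielding exactly the paper's real-part formula (your $|\la^2-s^2|^2$ equals the paper's $(|\la|^2-s^2)^2+4s^2\Im(\la)^2$), so that $\Re G(\la)=0$ forces $\la$ purely imaginary, plus a modulus bound on the imaginary axis. The only cosmetic difference is that final bound, where the paper uses the direct estimate $|G(i\beta)|\le 1/|\beta|<1/\kappa$ for $|\beta|>\max\{\rho,\kappa\}$ rather than your monotonicity-plus-value-at-$\kappa$ argument; note that when $\kappa\le\rho$ your decrease-from-$t=\kappa$ step is not available on $(\kappa,\rho]$, but the direct estimate (which your own ``trivial bound'' gives at general $t$) covers that vacuous case immediately.
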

 
\begin{proof} Each of the functions 
\begin{equation}\label{eq:Phij}
\Phi_j(t) = \frac{1}{2}\Bigl(\sqrt{1 + 4p_jp_{-j} \,t^2} - 1\Bigr)
\end{equation}
is analytic in the slit plane 
\begin{equation}\label{eq:slit}
\mathcal{W} = \C \setminus \{ \pm i\, t : t \ge 1/\kappa\}\,.
\end{equation}
We shall show that the function $G(\la)$ maps $\mathcal{U}$ into $\mathcal{W}$.
This implies that the functions appearing in \eqref{eq:Gj}
are all analytic, so that the identity must hold on all of $\mathcal{U}$ by analytic
continuation.

We use some well-known spectral theory. Let $\mu$ be the Plancherel measure
of our random walk, introduced  in \S \ref{sec:twin}. Recall that
$\mu$ is a probability measure concentrated on $\spec(P)$, and
is the diagonal matrix element at $(x,x)$ (independent of $x \in T$ by group invariance)
of the spectral resolution of the self-adjoint operator $P$ on $\ell^2(T,\mm)$.
In more classical terms, it is the measure on $[-\rho\,,\,\rho]$ whose 
$n^{\textrm{th}}$ moments are the return probabilities $p^{(n)}(x,x)$ for $n \ge 0$. 
Since in the present case, these probabilities are $0$ when $n$ is odd, $\mu$ 
is symmetric (invariant under the reflection $t \mapsto -t$). Thus 
$$
G(\la) = \int_{[-\rho\,,\,\rho]} \frac{1}{\la - t}\; d\mu(t)\,,\quad 
\la \in \C\setminus \spec(P)\,.
$$
Now let $|\la| > \rho$ be such that $\Re(\la) \ne 0$, and write $\bar \la$ for 
its complex conjugate. Then
$$
\begin{aligned}
\overline{G(\la)} &= \int_{[-\rho\,,\,\rho]} \frac{1}{\bar\la - t}\; d\mu(t)
= \int_{[-\rho\,,\,\rho]} \frac{1}{\bar\la + t}\; d\mu(t)\,,\quad \text{whence}\\[4pt]
\Re \bigl(G(\la)\bigr) &= \frac12 \int_{[-\rho\,,\,\rho]}\biggl(\frac{1}{\la - t} + 
\frac{1}{\bar\la + t}\biggr) \;d\mu(t)
= \Re(\la)  \int_{[-\rho\,,\,\rho]} \frac{|\la|^2 - t^2}{(|\la|^2 - t^2)^2 + 4t^2\Im(\la)^2}\;
\,d\mu(t).
\end{aligned}
$$
The last integral is $> 0$, so that also $\Re \bigl(G(\la)\bigr) \ne 0$. 
Therefore $G(\la) \in \mathcal{W}\,.$

Next, let $\la = i\,\beta$, where $\beta \in \R$ and $|\beta| > \max \{\rho, \kappa\}$. 
Then, using again that $\mu$ is symmetric (so that odd functions integrate to $0$), 
$$
G(i\,\beta) = \int_{[-\rho\,,\,\rho]} \frac{-i\, \beta - t}{\beta^2 + t^2}\,d\mu(t) 
= -\frac{i}{\beta}\int_{[-\rho\,,\,\rho]} \frac{1}{1 + (t/\beta)^2}\,d\mu(t)\,. 
$$
Therefore $|G(i\,\beta)| \le 1/|\beta| < 1/\kappa$, and also $G(i\,\beta) \in \mathcal{W}\,$.
\end{proof}

We now obtain the following.

\begin{thm}\label{thm:phi} For $\la \in \mathcal{U}$,
$$
\la \,G(\la) = \Phi\bigl( G(\la) \bigr)\,,\quad \text{where}\quad
\Phi(t) = 1 + \sum_{j \in I}  \frac{d_j}{2}\Bigl(\sqrt{1 + 4p_jp_{-j} \,t^2} - 1\Bigr)\,.
$$
The function $\Phi(t)$ is analytic in the domain $\mathcal{W}$ of \eqref{eq:slit}. Furthermore,
$$\rho = \min \{ \Phi(t)/t : t > 0 \} = \Phi(\theta)/\theta\,,
$$
where $\theta$
is the unique positive real solution of the equation $\Phi'(t) = \Phi(t)/t$.
\end{thm}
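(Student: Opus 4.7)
For the functional identity, I would apply the third formula in Lemma~\ref{lem:recover} with the Green weights $f(x,y)=F(x,y|\la)=G(x,y|\la)/G(\la)$, using transitivity so that $G(x,x|\la)=G(\la)$ is independent of $x$. If $(x,y)$ has type $j$, reversibility gives
\[
f(x,y)\,f(y,x)=\frac{p_{-j}}{p_j}\,\frac{G_j(\la)^2}{G(\la)^2}.
\]
Equation \eqref{eq:GG} rearranges to $p_jG(\la)^2-p_{-j}G_j(\la)^2=G_j(\la)$, whence $1-f(x,y)f(y,x)=G_j(\la)/\bigl(p_jG(\la)^2\bigr)$ and therefore $f(x,y)f(y,x)/\bigl(1-f(x,y)f(y,x)\bigr)=p_{-j}\,G_j(\la)$. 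Grouping neighbours by type and invoking Proposition~\ref{pro:eqext} to identify $p_{-j}G_j(\la)$ with $\Phi_j(G(\la)):=\tfrac12\bigl(\sqrt{1+4p_jp_{-j}G(\la)^2}-1\bigr)$ yields $\la\,G(\la)=1+\sum_j d_j\,\Phi_j(G(\la))=\Phi(G(\la))$ on $\mathcal{U}$. Analyticity of $\Phi$ on $\mathcal{W}$ then follows from Weierstrass' $M$-test: each $\Phi_j$ is an analytic branch on $\mathcal{W}$, and on a compact set $K\subset\mathcal{W}$ one has $|\Phi_j(t)|\le C_K\,p_jp_{-j}\le C_K\,p_{-j}$, so the majorising series is summable by assumption~\eqref{eq:ass}.

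For the variational part, I would study $\Psi(t):=\Phi(t)/t$ on $(0,\infty)$. A direct computation gives $\Phi_j''(t)=2p_jp_{-j}/(1+4p_jp_{-j}t^2)^{3/2}>0$, so $\Phi$ is strictly convex on the positive real axis. Set $h(t):=t\Phi'(t)-\Phi(t)=t^2\,\Psi'(t)$; then $h'(t)=t\Phi''(t)>0$, so $h$ is strictly increasing. One has $h(0^+)=-\Phi(0)=-1<0$, while elementary algebra gives $t\Phi_j'(t)-\Phi_j(t)=(u_j-1)/(2u_j)$ with $u_j=\sqrt{1+4p_jp_{-j}t^2}$; this quantity increases to $\tfrac12$ as $t\to\infty$. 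Since the vertex degree $\sum_j d_j\ge 3$, the limit of $h$ at $+\infty$ is at least $\tfrac12>0$ (and is $+\infty$ when the degree is infinite). Hence $h$ has a unique positive zero $\theta$, at which $\Phi'(\theta)=\Phi(\theta)/\theta$; $\Psi$ is strictly decreasing on $(0,\theta)$ and strictly increasing on $(\theta,\infty)$, so $\theta$ is its unique critical point and the global minimum.

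The hardest step is to identify $\Psi(\theta)$ with $\rho$. For real $\la>\rho$, the identity $\la=\Psi(G(\la))$, together with strict monotonicity of $\la\mapsto G(\la)$ and $\rho$-transience (ensured by the degree hypothesis via {\sc Guivarc'h}~\cite{Gu}), shows that $G$ is a strictly decreasing bijection $[\rho,\infty)\to(0,G(\rho)]$ whose inverse is $\Psi$; since $\Psi$ must be decreasing on this range, we obtain $G(\rho)\le\theta$ and hence $\rho=\Psi(G(\rho))\ge\Psi(\theta)$. To exclude $G(\rho)<\theta$ I would argue by contradiction: in that case $\Psi'(G(\rho))\ne 0$, and the implicit function theorem extends $G$ analytically to a full complex neighbourhood of $\la=\rho$. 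But the power series $\sum_n p^{(n)}(o,o)\,w^n = \la\,G(\la)\bigr|_{\la=1/w}$ has radius of convergence exactly $1/\rho$, and by Pringsheim's theorem (all coefficients are non-negative) the point $w=1/\rho$, i.e.\ $\la=\rho$, is a singular point. This rules out the analytic continuation, forces $G(\rho)=\theta$, and gives $\rho=\Phi(\theta)/\theta=\min_{t>0}\Phi(t)/t$.
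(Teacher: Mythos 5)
Your proposal is correct, and on the first assertion it is essentially the paper's argument in light disguise: the paper obtains $\la G(\la)-1=\sum_j d_j\,p_j\,G_{-j}(\la)=\Phi\bigl(G(\la)\bigr)-1$ directly from the resolvent identity $\sum_y p(x,y)G(y,x|\la)=\la G(\la)-1$ together with \eqref{eq:Gj} and Proposition \ref{pro:eqext}, whereas you route through the third identity of Lemma \ref{lem:recover}; since your computation gives $f(x,y)f(y,x)/\bigl(1-f(x,y)f(y,x)\bigr)=p_{-j}G_j(\la)=p_jG_{-j}(\la)$ by reversibility, the two derivations are algebraically the same, and your appeal to Proposition \ref{pro:eqext} to justify $p_{-j}G_j(\la)=\Phi_j\bigl(G(\la)\bigr)$ on all of $\mathcal{U}$ is exactly where the paper puts the weight as well. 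For analyticity, the paper bounds $\bigl|\sqrt{1+4p_jp_{-j}t^2}-1\bigr|<2\sqrt{p_jp_{-j}}\,|t|$ and uses Cauchy--Schwarz with $\sum_j d_jp_j=1$ to reduce to \eqref{eq:ass}; your quadratic bound $|\Phi_j(t)|\le C_K\,p_jp_{-j}\le C_K\,p_{-j}$ (valid since $\Re\sqrt{1+w}\ge 0$ gives $|\sqrt{1+w}-1|\le|w|$ on $\mathcal{W}$) achieves the same end a bit more crudely but correctly. The genuine added value of your write-up is the variational part: the paper disposes of $\rho=\min_{t>0}\Phi(t)/t=\Phi(\theta)/\theta$ with a pointer to ``well-known lines'' (\cite[Ex.~9.46]{W-Markov}), while you reconstruct precisely that classical argument in full --- convexity of $\Phi$, monotonicity of $h(t)=t\Phi'(t)-\Phi(t)$ with $h(0^+)=-1$ and $\lim_{t\to\infty}h(t)=-1+\frac12\sum_j d_j\ge\frac12$ (this is exactly where the hypothesis $\dg\ge 3$ enters, a point worth making explicit as you do), $\rho$-transience via Guivarc'h to get $G(\rho)<\infty$, and the implicit-function-theorem/Pringsheim dichotomy forcing $G(\rho)=\theta$. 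Two small points to polish if you expand the plan: at $\la=\rho$ the identity $\rho=\Phi\bigl(G(\rho)\bigr)/G(\rho)$ needs a one-line continuity/monotone-convergence argument, since $\rho\notin\mathcal{U}$; and in the contradiction step you should note that the IFT solution agrees with $G$ on $(\rho,\rho+\epsilon)$ by uniqueness and hence, by the identity theorem, patches with $G$ to an analytic extension of $\la\mapsto\la G(\la)$ near $\rho$, which is what Pringsheim (applied to $\sum_n p^{(n)}(o,o)w^n$ at $w=1/\rho$) forbids.
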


\begin{proof}
First of all, observe that for $t \in \C \setminus \{ i\,s : s \in \R\,,\; |s| \ge 1\}$,
$$
\big| \sqrt{1 + t^2} - 1 \big| < |t|\,.
$$
Therefore, summing over all $j \in I$,
$$
\sum \frac{d_j}{2}\Bigl|\sqrt{1 + 4p_jp_{-j} \,t^2} - 1\Bigr|
< |t| \sum d_j \sqrt{p_jp_{-j}} \le |t| \sqrt{\sum d_j\,p_{-j}}\,, 
$$
which is finite by assumption \eqref{eq:ass}. Consequently, even when
$I$ is infinite, the defining series of $\Phi(t)$ converges absolutely and
locally uniformly on $\mathcal{W}$, so that $\Phi(t)$ is indeed analytic
on that set. Now we can use \eqref{eq:Gj} and Proposition \ref{pro:eqext}:
for $\la \in \mathcal{U}$,
$$
\la\, G(\la) - 1 = \sum_y p(x,y)G(y,x|\la) = \sum_{j \in I} d_j\,p_j\,G_{-j}(\la) 
= \Phi\bigl(G(\la)\bigr) - 1\,.
$$
The remaining statements of the theorem follow well-known lines, compare e.g.
with \cite[Ex. 9.46]{W-Markov}, where the variable $z=1/\la$ is used instead of $\la$, 
and see also below.  
\end{proof}

\begin{rmks}\label{rmks:old}
For the free group with (finitely or) infinitely many generators, the equation
for $G(\la)$ of Theorem \ref{thm:phi} was first deduced and used for finding
the asymptotics of $p^{(n)}(x,x)$ by {\sc Woess}~\cite{Wfree}. Its validity
was restricted to a complex neighbourhood of the real half-line $[\rho\,,+\infty)$  
There, computations are performed in the variable $z=1/\la$. 
A previous variant (for $z$, resp. $\la$
positive real) is inherent in work of {\sc Levit and Molchanov}~\cite{LeMo}. 
Later on, {\sc Aomoto}~\cite{Ao} considered equations of the same nature
as \eqref{eq:GG} for the case of finitely generated free groups plus reasonings of
algebraic geometry to study the nature of the involved functions and the
spectrum of $P$. Similarly, {\sc Fig\`a-Talamanca and Steger}~\cite{FiSt} considered the case 
when the group is discrete as in \eqref{eq:discgp}, $I$ is finite, and $j=-j$ for 
all $j$. This served for an in-depth study of the associated harmonic analysis.

What is new here is 
\begin{itemize}
\item the extension to the general group-invariant case,
with $I$ finite or infinite, 
\item
 the validity of the equation for $G(\la)$
in the large domain $\mathcal{U}$. 
\end{itemize}
This domain can be further extended a bit by
additional estimates, but for complex $\la$ close to $\spec(P)$, the situation is
more complicated. Indeed, in such regions, the correct 
solution of \eqref{eq:GG} may be the one where one has to use the negative branch 
of the square root in \eqref{eq:Gj}.
The general formula instead of the one of Theorem \ref{thm:phi} is then
$$
\la\, G(\la) = 1 +  \sum_{j \in I}  \frac{d_j}{2}\Bigl(\pm\sqrt{1 + 4p_jp_{-j} \,G(\la)^2} - 1\Bigr),
$$
where the signs may vary according to the region to which $\la$ belongs.
This requires
some subtle algebraic geometry beyond the focus of the present paper \cite{Ao}, \cite{FiSt}.
\end{rmks}

In the general group-invariant set-up, and even for non-locally finite $T$,
we obtain the integral representation 
of Theorem  \ref{thm:general} with respect to the Martin kernel 
$k(x,\xi) = K(\cdot, \cdot|\la)$ for any 
$\la$-harmonic function, whenever $0 \ne \la \in \C \setminus \spec(P)$, for $\la = \pm \rho$, 
and possibly also for $\la =0$. 

The study of twin kernels and the resulting integral
representation of $\la$-harmonic functions becomes more delicate in view of the
fact that $G(\la)$, and thus also the functions $F_j(\la) = G_j(\la)/G(\la)$,
are only given via the implicit equation for $G(\la)$ of Theorem \ref{thm:phi}. Therefore we limit attention to the case when $\la \in (\rho\,,\,+\infty)$ is real. For real $t$,
each function $\Phi_j$ of \eqref{eq:Phij} describes the upper branch of a hyperbola.
Thus, the function $\Phi$ has the following properties: it is strictly increasing and 
strictly convex, 
$$
\Phi(0)=1\,,\quad \Phi'(0)=0\,,\AND \lim_{t \to \infty} \Phi'(t) = \la_0\,,
\; \text{ where } \la_0 = \sum_{j \in I} d_j \sqrt{p_jp_{-j}}\,.
$$
We have $\la_0 < \infty$ by assumption \eqref{eq:ass}. Note that in the case of the 
affine random walks of \S \ref{sec:twin}, this is the same $\la_0$ as in \eqref{eq:la0}.
\begin{figure}[ht]
$$
\beginpicture  
\setcoordinatesystem units <.8mm,.8mm> 

\setplotarea x from -13 to 66, y from -12 to 61

\arrow <1.5mm> [.2,.6] from -2 0 to 60 0
\put {$t$} [l] <1mm, 0mm> at 60 0

\arrow <1.5mm> [.2,.6] from 0 -7 to 0 58
\put {$y$} [b] <0mm, 1mm> at 0 58

\setlinear \plot -2 -7  60 55 /
\put {$y = \la_0\,t - \frac{q-1}{2}$} [l] <1mm, -.4mm> at 60 55

\setlinear \plot -2 -4  30 60 /
\put {$y =\la\,t$} [b] <0mm, .5mm> at 30 60

\setquadratic \plot -2 10.13  0 10  2 10.13  10 13.03  20 20  
                    30 28.54  40 37.72  50 47.01  60 56.85 /
\put {$y = \Phi(t)$} [br] <2mm, .7mm> at 60 56.9

\put {\rm Figure~2} at 31 -10

\endpicture
$$
\end{figure}
For $\la \ge \la_0\,$, the equation
$\la\, t = \Phi(t)$ has a unique positive solution. This is $t = G(\la)$. See Figure~2,
where we assume that $\dg(x) = q+1$ is finite. 
With $\theta$ and $\rho$ as in Theorem \ref{thm:phi}, it is clear from the shape
of $\Phi$ that $\la_0 > \rho$, and for $\la_0 > \la > \rho$, there are precisely
two solutions of the equation $\la\, t = \Phi(t)$. One is smaller than $\theta$ and the other is
larger than $\theta$. By continuity of $G(\cdot)$, the correct solution for $G(\la)$ is the one
for which $G(\la) < \theta$: this is the solution that leads to the ordinary $\la$-Martin kernel
$K(\cdot,\cdot|\la)$ and the resulting integral representation of any $\la$-harmonic
function over $\partial T$.   
But we also have the second solution $\wt G(\la) > \theta$. Working with this one,
we also find that for all $\la \in (\rho\,,\,\la_0)$ one has
$$
\wt G(\la) \ne 0\AND  \wt G_j(\la) = \Phi_j\bigl(G(\la)\bigr)\big/p_{-j} \ne 0\,,
$$
whence $\wt F_j(\la) = \wt G_j(\la)/\wt G(\la) \ne 0$.
Also,
$$
\wt F_j(\la) \,\wt F_{-j}(\la) = \frac{\Phi_j\bigl(\wt G(\la)\bigr)^2}{p_jp_{-j}\, \wt G(\la)^2} < 1\,,
$$
since $\sqrt{1+t^2}-1 < t$ for $t > 0$. 
Thus, \eqref{eq:ne1} holds for the weigths $f(x,y) = \wt F_j(\la)$, when $(x,y)$
is an oriented edge of type $j$. Let us verify \eqref{eq:nela}:
$$
\sum_v p(x,v) f(v,x) = \sum_{j \in I} d_j\, p_j\, \frac{\wt G_{-j}(\la)}{\wt G(\la)} 
= \frac{\Phi\bigl(\wt G(\la)\bigr) - 1}{\wt G(\la)} = \frac{\la\,\wt G(\la)-1}{\wt G(\la)} < \la\,.
$$
Finally, \eqref{eq:la} reduces to equation \eqref{eq:GG}, which holds for $\wt G_j(\la)$
as well as for $G_j(\la)$.  We conclude that these edge weights lead to a second kernel
$$
k(x,\xi) = \wt K(x,\xi|\la)\,,\quad x \in T\,,\; \xi \in \partial T\,,\quad 
\la \in (\rho\,,\la_0)\,,
$$
so that $x \mapsto \wt K(x,\xi|\la)$ is positive $\la$-harmonic. 
Thus, every $\la$-harmonic function has a second integral representation as in
Theorem \ref{thm:general}, in addition to the one with respect to the ordinary
Martin kernel $K(\cdot,\cdot|\la)$. 

Again, for any $\xi \in \partial T$, there is a positive ($\sigma$-additive$\,$!)
Borel probability measure $\nu^{\xi}$ on $\partial T$ such that
$$
\wt K(x,\xi|\la) = \int_{\partial T} K(x,\cdot|\la)\, d\nu^{\xi}.
$$
We omit the computation which shows that $\nu^{\xi}$ is supported by all of
$\partial T$, which is a consequence of the fact that $T$ has degree $\ge 3$.
In particular, $\wt K(x,\xi|\la)$ cannot be a minimal $\la$-harmonic function, i.e.,
an extremal point of the set $\mathcal{H}_o$ of \eqref{eq:base}.
Therefore the converse representing distribution $\tilde \nu^{\xi}\,$, that
by Theorem \ref{thm:general} gives the integral representation
$$
K(x,\xi|\la) = \int_{\partial T} \wt K(x,\cdot|\la)\, d\tilde \nu^{\xi}\,,
$$
cannot have a $\sigma$-additive extension.

\smallskip

We may ask how to proceed for $\la > \la_0$, while we exclude the case $\la =  \la_0\,$,
since we have already seen in \S \ref{sec:twin} that for affine random walks there is
no natural choice for a second family of weights for $\la_0\,$.
We choose to proceed as follows, \emph{requiring here that $I$ be finite} 
and $\sum_j d_j = q+1$. 

The second solution of \eqref{eq:GG} is
$$
\wt G_j(\la) = \frac{1}{p_{-j}} \;\wt \Phi_j\bigl(\wt G(\la) \bigr)\,,\quad \text{where}
\quad \wt \Phi_j(t) = \frac{1}{2}\Bigl(-\sqrt{1 + 4p_jp_{-j} \,t^2} - 1\Bigr).
$$
Then we set 
$$
\wt \Phi(t) = \sum_{j \in I} d_j\, \wt \Phi_j(t)\,.
$$
(When $I$ is infinite, the series does not converge.)
While $\Phi(t)$ is a sum of upper  branches of hyperbolic functions, $\wt \Phi(t)$ it the sum
of the associated lower branches. The two asymptotes of $\Phi(t)$ and $\wt \Phi(t)$
are  $\;y = \pm \la_0 t -(q-1)/2\,$. Thus, any line $y = \la\, t$ has exactly two
intersection points with the ``twin curve'' $\bigl(\Phi(t), \wt \Phi(t)\bigr)$, except for
$\la = \pm \rho$, in which cases there is only one double solution, and $\la = \pm \la_0$, 
in which case there is only one simple solution. Thus, for $\la > \la_0$, we choose $\wt G(\la)$
as the unique solution of 
$$
\la \, \wt G(\la) = \wt\Phi\bigl(\wt G(\la)\bigr),
$$ 
which is negative. The associated solution for $\wt G_j(\la)$ is 
$$
\wt G_j(\la) = \frac{1}{p_{-j}} \; \wt \Phi_j\bigl(\wt G(\la)\bigr)\,,
$$
so that indeed 
$$
\sum_{j \in I} d_j\, p_j\, \wt G_{-j}(\la) = \wt\Phi\bigl(\wt G(\la)\bigr) - 1
=  \la \, \wt G(\la) - 1
$$
Note that also $\wt G_j(\la) < 0$, so that $\wt F_j(\la) = \wt G_j(\la)/\wt G(\la) > 0$.
The associated edge weights are again given by $f(x,y) = \wt F_j(\la)$, when $(x,y)$
is an oriented edge of type $j$.
It is straightforward to see that they also satisfy the requirements \eqref{eq:ne1} -- \eqref{eq:la},
so that we also obtain a positive kernel $k(x,\xi) = \wt K(x,\xi|\la)$ with the same
properties as above.

By symmetry, analogous properties hold for
negative $\la \in (-\infty\,,\,-\rho) \setminus\{-\la_0\}\,$.

\end{document}